\newcommand{\wt}{\textnormal{wt}}
\newcommand{\rk}{\textnormal{rk}}
\newcommand{\spac}{\mathbb{F}^{\mathbf{n} \times \mathbf{m}}_q}
\newcommand{\spacc}[3]{\mathbb{F}^{\mathbf{#1}\times \mathbf{#2}}_{#3}}
\newcommand{\srk}{\mathrm{srk}}
\newcommand{\erase}[2][says]
\newlength{\mynodespace}
\newtheorem{theorem}{Theorem}
\newtheorem{lemma}[theorem]{Lemma}
\newtheorem{corollary}[theorem]{Corollary}
\newtheorem{proposition}[theorem]{Proposition}
\theoremstyle{definition}
\newtheorem{definition}[theorem]{Definition}
\title{{Improved Gilbert-Varshamov bound\\ for sum-rank-metric codes via graph theory}}
\author{Aida Abiad\thanks{\texttt{a.abiad.monge@tue.nl}, Department of Mathematics and Computer Science, Eindhoven University of Technology, The Netherlands}
\thanks{Department of Mathematics and Data Science of Vrije Universiteit Brussel, Belgium}
\and
Harper Reijnders\thanks{\texttt{l.e.r.m.reijnders@tue.nl}, Department of Mathematics and Computer Science, Eindhoven University of Technology, The Netherlands}
\and
Michael Tait\thanks{\texttt{michael.tait@villanova.edu}, Department of Mathematics and Statistics, Villanova University, U.S.A.}
}
\date{}
\begin{document}

\maketitle

\begin{abstract}
We use a graph-theoretic approach which yields improvements on the known Gilbert-Varshamov (GV) bound for sum-rank-metric codes for certain parameters. In particular, we show that asymptotically $\mathbb{F}_q^{\mathbf{n}\times \mathbf{m}}$ can be partitioned into sum-rank-metric codes whose average size is bigger than the GV bound by a logarithmic factor for these parameters. Finally, we discuss the connection of such codes to set-coloring Ramsey numbers.

\bigskip

\noindent \textbf{Keywords:}  sum-rank-metric code, Gilbert-Varshamov bound, graph theory, Ramsey theory

\end{abstract}

\bigskip

\section{Introduction}

In recent years, the theory of sum-rank-metric codes has been extensively developed \cite{byrne_fundamental_2021, byrne2022anticodes, martinez_penas_codes_2022,  puchinger_generic_2022, M2024, AAA2024, AGKP2025}, and sum-rank-metric codes have found applications in several areas such as distributed storage \cite{martinez-penas_universal_2019}, space-time codes \cite{shehadeh_spacetime_2022} and multi-shot networks \cite{martinez-penas_reliable_2019}.

A \emph{sum-rank-metric code} is a subset of the Cartesian product of matrix spaces over a finite field. The \emph{sum-rank weight} of an element of this space is defined as the sum of the ranks of the component matrices. The \emph{sum-rank distance} between two elements is subsequently defined as the sum of the ranks of the differences of the component matrices. In general, codes that have a large minimum distance are desirable, as they allow many errors to be corrected. Sum-rank-metric codes generalize both codes in the Hamming metric (when all matrices are $1\times 1$) and in the rank metric (when there is 1 component matrix \cite{gabidulin, rankcodebook}).  

A fundamental problem in coding theory, and in particular also in the study of sum-rank-metric codes, is that of finding the largest code for which the elements elements of the code are pairwise at minimum distance $d$, for some fixed $d$. This quantity is denoted by $A^{SRK}_{q}(\mathbf{n},\mathbf{m},d)$. Many different upper bounds on $A^{SRK}_{q}(\mathbf{n},\mathbf{m},d)$ are known, see \cite[Section III]{byrne_fundamental_2021} for an overview. More recently, stronger upper bounds obtained from spectral graph theory were shown in \cite{AAA2024}, and also a linear programming upper bound was obtained in \cite{AGKP2025}; this latter is the best upper bound currently known for sum-rank metric codes. On the other side, the most common way to find lower bounds on any coding metric is through constructions. For the sum-rank case too, many different types of sum-rank-metric codes have been constructed to obtain lower bounds \cite{AGcodes, chen2023, LCC, M2024, martinez2023}. In particular, much effort has been spent finding \emph{maximum sum-rank distance codes} (\emph{MSRD codes} for short), which are sum-rank-metric codes meeting the Singleton bound (see \cite[Theorem III.1]{byrne_fundamental_2021}). We refer the reader to \cite{M2024} for the details of various MSRD constructions.

In this paper we use graph-theoretic techniques to study sum-rank-metric codes. Our first result is an improvement of the well-known Gilbert-Varshamov (GV) lower bound, also known as the sphere covering bound. Such a GV bound can be obtained for any metric by a simple sphere covering argument. In the sum-rank-metric case, the GV bound was shown in \cite[Theorem III.11]{byrne_fundamental_2021}. For the case of linear sum-rank-metric codes, simplified and easier to compute versions of the GV bound were derived in \cite{OPB2021}, where it was also shown that random linear codes attain the GV bound asymptotically. Though they only considered the case where the sizes of the component matrices are equal. In \cite{chen2023}, explicit constructions of codes achieving the asymptotic GV bound from \cite{OPB2021} were given. 

In the first part of this paper we will derive asymptotic improvements of the general GV bound for sum-rank-metric codes (see Theorem \ref{thm:GV_SRK}) for certain parameters. In fact, we will show something stronger, namely that the sum-rank-metric ambient space may be partitioned into codes whose average size is bigger than the GV bound. We do so by building on work of Jiang and Vardy \cite{jiang_asymptotic_2004} and Vu and Wu \cite{VW}, who developed an approach for improving the GV bound for Hamming-metric codes by describing the ambient space in terms of a graph and bounding the local density. This graph-theoretic approach has been applied extensively, see \cite{jiang_asymptotic_2004, VW, zhang_improved_2023, kim_exponential_2022}.

Using this graph-theoretic approach, we extend the results of Vu and Wu to certain sum-rank metrics. In particular, we obtain an asymptotic improvement for when the number of components of the sum-rank metric tends to infinity (Theorem \ref{thm:srk_asymp_gv}), and for when the size of one of the matrices grows asymptotically (Theorem \ref{thm:matrix_size_gv}).

The second part of the paper uses a problem in Ramsey theory to study sum-rank-metric codes. A \emph{set-coloring Ramsey number}, denoted $R(k;r,s)$, is the minimum positive integer $n$ such that in every edge-coloring of the complete graph on $n$ vertices, where each edge is mapped onto a set of $s$ colors chosen from a palette of $r$ colors, there exists a monochromatic $k$-clique. A connection between set-coloring Ramsey numbers and Hamming-metric codes was shown in \cite{CFHMSV22}, and further expanded upon in \cite{CFPZ2024}. Here we establish a link between sum-rank-metric codes and set coloring Ramsey numbers using the aforementioned connection, as well as the relation between Hamming-metric codes and sum-rank-metric codes, see e.g.\cite{byrne_fundamental_2021, martinez_penas_codes_2022}.

This paper is structured as follows. In Section \ref{sec:preliminaries} the basic definitions for graphs and sum-rank-metric codes are presented. Then, in Section \ref{sec:ham_srk_connection}, we cover the connection between the sum-rank metric and the Hamming metric which will be used in the rest of the paper. In Section \ref{sec:asymp_GV} we provide an asymptotic improvement on the GV bound (Theorem \ref{thm:GV_SRK}) for certain sum-rank-metric codes. Finally, in Section \ref{sec:SRK_Ramsey} we explore a connection between sum-rank-metric codes and set-coloring Ramsey numbers.

\section{Preliminaries}
\label{sec:preliminaries}


In this paper we consider simple undirected graphs, which we denote as $G = (V,E)$, where $V$ is the vertex set, and $E \subset \binom{V}{2}$ the edge set. The \emph{degree} of a vertex $v \in V$ is defined as the number of edges incident to $v$. The \emph{maximum degree} of a graph, denoted by $D(G)$ or simply $D$, is defined as the maximum degree of a vertex over all vertices in $G$. The \emph{independence number} of $G$, denoted by $\alpha(G)$, is defined as the largest subset of vertices $U \subset V$, such that the vertices in $U$ are pairwise not adjacent. The {\em chromatic number} of $G$, denoted by $\chi(G)$, is the minimum number of sets required to partition the vertex set of $G$ into independent sets.

For a graph $G$ and a positive integer $k$, the \emph{$k$-th power graph} of $G$, denoted by $G^k$, has the same vertex set as $G$, with two vertices being adjacent in $G^k$ if they are at distance at most $k$ in the original graph $G$.

For a positive integer $n$, we denote by $[n]$ the set $\{1,\dots,n\}$. Let $t$ be a positive integer and let $\mathbf{n} = (n_1,\hdots,n_t)$, $\mathbf{m} = (m_1,\hdots,m_t)$ be ordered tuples of positive integers with $m_i\geq n_i$ for all $i\in [t]$. 

For a prime power $q$ and positive integers $m \geq n$, 
let $\mathbb{F}_q^{n\times m}$ denote the vector space of all $n \times m$ matrices over the finite field $\mathbb{F}_q$. 
Denote by $\rk(M)$ the rank of a matrix $M \in \mathbb{F}_q^{n \times m}$.

\begin{definition}\label{def:srk}
    The \emph{sum-rank-metric space} is 
    an $\mathbb{F}_q$-linear vector space 
    $\mathbb{F}_q^{\mathbf{n}\times \mathbf{m}}$ defined as the Cartesian product of the vector spaces $\mathbb{F}_q^{n_i \times m_i}$ as follows:
    \begin{equation*}
        \mathbb{F}_q^{\mathbf{n}\times \mathbf{m}}:=
    \mathbb{F}_q^{n_1\times m_1}\times \cdots \times
    \mathbb{F}_q^{n_t\times m_t}.
    \end{equation*}
    
    For an element $\mathbf{X} =(X_1,\ldots, X_t) \in \mathbb{F}_q^{\mathbf{n} \times \mathbf{m}}$, we call $\srk(\mathbf{X}) =  \sum_{i=1}^t \rk(X_i)$ the \emph{sum-rank} of $\mathbf{X}$.
    The \emph{sum-rank distance} between 
    $\mathbf{X},\mathbf{Y} \in \mathbb{F}_q^{\mathbf{n}\times \mathbf{m}}$ is $\srk(\mathbf{X} - \mathbf{Y})$, where we use the notation $\mathbf{X} - \mathbf{Y} = (X_1-Y_1,\hdots,X_t-Y_t)$.  
\end{definition}

\begin{definition}
A \emph{sum-rank-metric code} is a non-empty subset $\mathcal{C} \subseteq \mathbb{F}_q^{\mathbf{n}\times \mathbf{m}}$. 
The \emph{minimum distance} of a code $\mathcal{C}$ with $\abs{\mathcal{C}} \geq 2$ is defined as the minimum distance between two codewords $\mathbf{X},\mathbf{Y} \in \mathcal{C}$, and we denote it as:
\begin{equation*}
    \srk(\mathcal{C}) := \min\left\{\srk(\mathbf{X}-\mathbf{Y})\colon \mathbf{X}, \mathbf{Y} \in \mathcal{C}, \, \mathbf{X} \neq \mathbf{Y}\right\}.
\end{equation*}
\end{definition}

Note that if $t=1$, then the sum-rank metric reduces to the rank-metric, and if $n_i=m_i=1$ for all $i \in [t]$, then it reduces to the Hamming metric.

By $A^{SRK}_{q}(\mathbf{n},\mathbf{m},d)$ we denote the size of the largest code $\mathcal{C} \subset \spac$ such that $\srk(\mathcal{C}) \geq d$. I.e.
\begin{equation*}
    A^{SRK}_{q}(\mathbf{n},\mathbf{m},d) := \max\{\abs{\mathcal{C}} : \mathcal{C} \subset \spac, \srk(\mathcal{C}) \geq d\}.
\end{equation*}

We denote by $V_q(\mathbf{n},\mathbf{m},k)$ the \emph{volume of the ball of radius $k$} around a codeword in $\spac$
\begin{equation*}
    V_q(\mathbf{n},\mathbf{m},k) = \abs{\{\mathbf{X} \in \spac: \srk(\mathbf{X}) \leq k\}}.
\end{equation*}

The following definition, first introduced in \cite{AAA2024}, allows us to study sum-rank-metric codes from a graph-theoretical perspective. 

\begin{definition}\label{def:srkgraph}
    The \emph{sum-rank-metric graph} $\Gamma(\spac)$ is a graph whose vertex set is $\mathbb{F}_q^{\mathbf{n}\times \mathbf{m}}$, where two vertices $\mathbf{X},\mathbf{Y}\in \mathbb{F}_q^{\mathbf{n} \times \mathbf{m}}$ are adjacent if and only if $\srk(\mathbf{X}-\mathbf{Y}) = 1$. Such a graph has $\abs{V} = q^{\sum_{i=1}^t n_im_i}$ vertices.
\end{definition}

A simple yet important observation is that, for any two vertices $\mathbf{X},\mathbf{Y}$ of the graph $\Gamma\left(\mathbb{F}_q^{\mathbf{n}\times \mathbf{m}}\right)$, the geodesic distance between them coincides with the sum-rank distance $\srk(\mathbf{X}-\mathbf{Y})$, see \cite[Proposition 4.3]{byrne2022anticodes}. This observation gives us a useful characterization of adjacency in the power graph $\Gamma(\spac)^k$. Two vertices $\mathbf{X},\mathbf{Y}$ are adjacent in $\Gamma(\spac)^k$ if and only if $\srk(\mathbf{X}-\mathbf{Y}) \leq k$. Hence, we have the following result, which lies at the heart of our graph theoretical approach - an equivalence that allows us to use the graph theory toolkit to tackle a coding theoretical problem.

\begin{proposition}\cite[Corollary 16]{AAA2024}\label{prop:Aq_alpha}
    For a sum-rank-metric space $\spac$, and an integer $k$, we have
    \begin{equation*}
        A_q^{SRK}(\mathbf{n},\mathbf{m},k+1) = \alpha(\Gamma(\spac)^k).
    \end{equation*}
\end{proposition}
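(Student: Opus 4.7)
The plan is to prove the equality by establishing a bijective correspondence between sum-rank-metric codes of minimum distance at least $k+1$ and independent sets in $\Gamma(\spac)^k$. Since both sides are maxima of the cardinalities of these families, it suffices to show that a subset $\mathcal{C} \subseteq \spac$ is a code of minimum distance at least $k+1$ if and only if $\mathcal{C}$ is an independent set of $\Gamma(\spac)^k$.

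First I would leverage the key observation emphasized just before the statement: for any $\mathbf{X}, \mathbf{Y} \in \spac$, the geodesic distance in $\Gamma(\spac)$ coincides with $\srk(\mathbf{X}-\mathbf{Y})$ (this is the cited result \cite[Proposition 4.3]{byrne2022anticodes}). By definition of the $k$-th power graph, $\mathbf{X}$ and $\mathbf{Y}$ are adjacent in $\Gamma(\spac)^k$ exactly when their geodesic distance in $\Gamma(\spac)$ is between $1$ and $k$, which by the observation is equivalent to $1 \leq \srk(\mathbf{X}-\mathbf{Y}) \leq k$.

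For the forward direction, suppose $\mathcal{C}$ has minimum sum-rank distance at least $k+1$. Then for any two distinct $\mathbf{X}, \mathbf{Y} \in \mathcal{C}$ one has $\srk(\mathbf{X}-\mathbf{Y}) \geq k+1 > k$, so $\mathbf{X}$ and $\mathbf{Y}$ are non-adjacent in $\Gamma(\spac)^k$; hence $\mathcal{C}$ is an independent set, yielding $A_q^{SRK}(\mathbf{n},\mathbf{m},k+1) \leq \alpha(\Gamma(\spac)^k)$. For the reverse direction, let $\mathcal{C}$ be an independent set in $\Gamma(\spac)^k$. Then for any distinct $\mathbf{X}, \mathbf{Y} \in \mathcal{C}$, non-adjacency in the power graph forces $\srk(\mathbf{X}-\mathbf{Y}) \geq k+1$ (it cannot be zero since $\mathbf{X} \neq \mathbf{Y}$). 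Thus $\mathcal{C}$ is a code of minimum distance at least $k+1$, giving the opposite inequality and concluding equality.

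There is no real obstacle: the substantive content is entirely contained in the identification of geodesic distance with sum-rank distance, which is already available to us. The proof is essentially a matter of unwinding definitions cleanly on both sides.
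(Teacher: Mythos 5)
Your proof is correct, and it follows exactly the approach the paper indicates in the paragraph immediately preceding the proposition (and in the cited reference): translating adjacency in the power graph $\Gamma(\spac)^k$ into the condition $1 \le \srk(\mathbf{X}-\mathbf{Y}) \le k$ via \cite[Proposition 4.3]{byrne2022anticodes}, and then observing that independent sets of $\Gamma(\spac)^k$ are precisely codes of minimum sum-rank distance at least $k+1$. No substantive difference.
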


\section{The connection between sum-rank-metric codes and Hamming codes}\label{sec:ham_srk_connection}

    As has already been discussed, sum-rank-metric codes are a generalization of Hamming metric codes. We will exploit this (and a more general) connection in the upcoming Sections \ref{sec:asymp_GV} and \ref{sec:SRK_Ramsey}. In this section, we will state and explain the results used in these later sections.
    
    The \emph{Hamming weight} of a vector $\mathbf{X} \in \mathbb{F}_q^N$, denote by $\wt(\mathbf{X})$, is defined as the number of nonzero elements in $\mathbf{X}$. This gives rise to a distance, which is called the \emph{Hamming distance}, for $\mathbf{X},\mathbf{Y} \in \mathbb{F}_q^N$ we define $d_H(\mathbf{X},\mathbf{Y}) = \wt(\mathbf{X}-\mathbf{Y})$. We further define, for a Hamming-metric code $C \subset \mathbb{F}_q^N$, $d_H(C) = \min\{d_H(\mathbf{X},\mathbf{Y}): \mathbf{X},\mathbf{Y} \in C\}$, and $A_q(N,d) = \max\{\abs{C}: C \subset \mathbb{F}_q^N, d_H(C) \geq d\}$.
    
    When in the sum-rank metric $n_i=m_i=1$ for all $i \in [t]$, then we indeed recover the Hamming metric. On top of this case, there are more interesting scenarios where the sum-rank metric can be connected to the Hamming metric. This was done in for instance \cite{byrne_fundamental_2021} and \cite{martinez_penas_codes_2022}.

    For $m = \max\{m_1,\hdots,m_t\}$, consider a basis $\mathbf{\alpha} = (\alpha_1,\hdots,\alpha_m)$ of $\mathbb{F}_{q^m}$ as a vector space over $\mathbb{F}_q$. Then let us define the map $f:\mathbb{F}_q^{\mathbf{n} \times \mathbf{m}} \rightarrow \mathbb{F}_{q^m}^N$ as:
    \begin{equation}\label{eq:srk_to_Hamming}
        f(\mathbf{X}) = (f^1(X_1),\hdots,f^t(X_t)),
    \end{equation}
    where $f^i: \mathbb{F}_q^{n_i \times m_i} \rightarrow \mathbb{F}_{q^m}^{n_i}$ is defined as:
    \begin{equation*}
        f^i(X_i) = \left(\sum_{j=1}^m (X_i)_{1,j}\alpha_j, \hdots, \sum_{j=1}^m (X_i)_{t,j}\alpha_j\right).
    \end{equation*}
    Notably,
    \begin{equation}\label{eq:codeword_leq}
        \srk(\mathbf{X}) \leq \wt_H(f(\mathbf{X})),
    \end{equation}
    as observed in \cite{byrne_fundamental_2021}. Inequality \eqref{eq:codeword_leq} was used to prove the following result:
    \begin{theorem}\cite[Theorem III.1]{byrne_fundamental_2021}\label{thm:srk_ham_leq}
        \begin{equation}\label{eq:leq_ineq}
            A_q^{SRK}(\mathbf{n},\mathbf{m},d) \leq A_{q^m}(N,d). 
        \end{equation}
    \end{theorem}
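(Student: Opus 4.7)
The plan is to transfer an optimal sum-rank-metric code to a Hamming-metric code via the map $f$ defined in \eqref{eq:srk_to_Hamming}, and use the pointwise inequality \eqref{eq:codeword_leq} to guarantee that the Hamming minimum distance is at least $d$.

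First I would fix a sum-rank-metric code $\mathcal{C} \subseteq \mathbb{F}_q^{\mathbf{n}\times \mathbf{m}}$ of maximum size $A_q^{SRK}(\mathbf{n},\mathbf{m},d)$ with $\srk(\mathcal{C}) \geq d$, and consider its image $f(\mathcal{C}) \subseteq \mathbb{F}_{q^m}^N$. The properties of $f$ that I would check are: (i) each component map $f^i$ is $\mathbb{F}_q$-linear, hence so is $f$, so $f(\mathbf{X}-\mathbf{Y}) = f(\mathbf{X}) - f(\mathbf{Y})$; (ii) each $f^i$ is injective, since $\alpha_1,\hdots,\alpha_m$ are $\mathbb{F}_q$-linearly independent in $\mathbb{F}_{q^m}$, so the coordinates of $f^i(X_i)$ determine the rows of $X_i$; and (iii) therefore $f$ itself is injective, which yields $|f(\mathcal{C})| = |\mathcal{C}|$.

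Next I would lower-bound the minimum Hamming distance of $f(\mathcal{C})$. For any distinct $\mathbf{X},\mathbf{Y} \in \mathcal{C}$, applying \eqref{eq:codeword_leq} to the difference $\mathbf{X}-\mathbf{Y}$, together with the $\mathbb{F}_q$-linearity of $f$, gives
\begin{equation*}
    d_H(f(\mathbf{X}), f(\mathbf{Y})) \;=\; \wt_H(f(\mathbf{X}-\mathbf{Y})) \;\geq\; \srk(\mathbf{X}-\mathbf{Y}) \;\geq\; d.
\end{equation*}
Thus $f(\mathcal{C})$ is a Hamming-metric code in $\mathbb{F}_{q^m}^N$ with minimum distance at least $d$, and by the definition of $A_{q^m}(N,d)$ we conclude
\begin{equation*}
    A_q^{SRK}(\mathbf{n},\mathbf{m},d) \;=\; |\mathcal{C}| \;=\; |f(\mathcal{C})| \;\leq\; A_{q^m}(N,d).
\end{equation*}

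There is essentially no serious obstacle here; the argument is a clean transport of codes through $f$, and all the content sits in the already-established inequality \eqref{eq:codeword_leq}. The only point requiring slight care is the injectivity of $f^i$ when $m_i < m$: because only the first $m_i$ basis vectors $\alpha_1,\hdots,\alpha_{m_i}$ are used in encoding row entries, one must note that any $\mathbb{F}_q$-linearly independent subset of a basis is still linearly independent, so injectivity survives the size mismatch between $m$ and $m_i$.
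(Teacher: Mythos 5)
Your proposal is correct and is precisely the argument the paper indicates: it points to inequality \eqref{eq:codeword_leq} together with the embedding $f$ of \eqref{eq:srk_to_Hamming} as the source of Theorem~\ref{thm:srk_ham_leq}, which is exactly the transport-of-codes argument you carry out. Your added remark about injectivity of $f^i$ when $m_i < m$ is a sensible and correct observation that fills in the one detail the cited reference leaves implicit.
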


    The next result gives a direct connection between the sum-rank metric and the Hamming metric in certain cases.

    \begin{theorem}\cite[Theorem 1.2]{martinez_penas_codes_2022}\label{thm:srk_ham_rr}
        Let $\mathbf{m} = (m,\hdots,m)$ and $f$ be as in \eqref{eq:srk_to_Hamming}, for some basis $\mathbf{\alpha} \subset \mathbb{F}$. Consider and  $\mathbf{X} \in \spac$, and for some $\mathbf{A} = (A_1,\hdots,A_t)$ with $A_i \in \mathbb{F}^{n_i \times n_i}$, denote by $\mathbf{X}\mathbf{A} = (X_1A_1,\hdots,X_tA_t)$. Then
        \begin{equation*}
            \srk(\mathbf{X}) = \min\{\wt_H(f(\mathbf{X}\mathbf{A}) \mid A_i\in \mathbb{F}^{n_i \times n_i} \text{ invertible for } i \in [t]\}.
        \end{equation*}        
    \end{theorem}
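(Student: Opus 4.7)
The plan is to establish the equality by proving the two inequalities separately. The upper bound $\srk(\mathbf{X}) \le \wt_H(f(\mathbf{X}\mathbf{A}))$ for every admissible $\mathbf{A}$ is immediate: invertible linear transformations on each component preserve the rank, so $\srk(\mathbf{X}\mathbf{A}) = \srk(\mathbf{X})$, and then applying \eqref{eq:codeword_leq} to $\mathbf{X}\mathbf{A}$ gives $\srk(\mathbf{X}) = \srk(\mathbf{X}\mathbf{A}) \le \wt_H(f(\mathbf{X}\mathbf{A}))$. Taking the infimum over all admissible $\mathbf{A}$ yields $\srk(\mathbf{X}) \le \min_{\mathbf{A}} \wt_H(f(\mathbf{X}\mathbf{A}))$.

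For the reverse inequality, I would argue component-wise. Fix $i \in [t]$. The first key observation is that because $\alpha_1, \ldots, \alpha_m$ is an $\mathbb{F}_q$-basis of $\mathbb{F}_{q^m}$, the $\ell$-th entry $\sum_{j=1}^m (X_i)_{\ell,j}\alpha_j$ of $f^i(X_i)$ vanishes if and only if row $\ell$ of $X_i$ is the zero row. Consequently, $\wt_H(f^i(X_i))$ equals precisely the number of nonzero rows of $X_i$. The second key observation is the standard linear-algebra fact that $\rk(X_i)$ equals the minimum number of nonzero rows attained across the orbit of $X_i$ under invertible row operations, a minimum realised by reducing $X_i$ to row-echelon form via a suitable invertible $n_i \times n_i$ matrix $A_i$. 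Combining the two observations gives $\wt_H(f^i(A_i X_i)) = \rk(X_i)$ for this choice of $A_i$.

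Performing this reduction simultaneously in every component produces an invertible $\mathbf{A}$ such that
\begin{equation*}
\wt_H(f(\mathbf{X}\mathbf{A})) = \sum_{i=1}^t \rk(X_i) = \srk(\mathbf{X}),
\end{equation*}
showing that the minimum is attained and completing the proof. I do not anticipate a real technical obstacle; the whole argument reduces to the two elementary facts above. The only point that deserves care is interpreting the notation $\mathbf{X}\mathbf{A}$ consistently with the stated dimensions $A_i \in \mathbb{F}^{n_i \times n_i}$, namely as the invertible row-action on each $X_i$ that produces the rank-minimising row-echelon form, so that the dimensions of the product make sense and the rank is preserved component-wise.
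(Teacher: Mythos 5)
Your proof is correct. The paper does not prove this statement itself---it is cited verbatim from \cite[Theorem~1.2]{martinez_penas_codes_2022}---so there is no in-paper argument to compare against, but your argument is the standard one: the upper bound follows from rank-invariance under the invertible action combined with inequality \eqref{eq:codeword_leq}, and the lower bound is attained by reducing each component to row-echelon form so that the number of nonzero rows (which is exactly $\wt_H(f^i(\cdot))$ since $\alpha$ is an $\mathbb{F}_q$-basis of $\mathbb{F}_{q^m}$) drops to $\rk(X_i)$. You are also right to flag the dimension issue: with $A_i \in \mathbb{F}^{n_i \times n_i}$ and $X_i \in \mathbb{F}_q^{n_i \times m}$ the product $X_iA_i$ is ill-typed, and the intended action is the left multiplication $A_iX_i$, precisely the invertible row operations your argument uses; right multiplication by an $m\times m$ invertible matrix would amount to a base change of $\alpha$ and would not change the set of zero rows, so it could not realise the rank as a Hamming weight.
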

    Next we show that in fact a slightly more general equality with the Hamming metric follows from Theorem \ref{thm:srk_ham_rr}.
    
\begin{proposition}\label{prop:all1_allm}
    Let $\mathbf{n} = (1,\hdots,1)$ and $\mathbf{m} = (m,\hdots,m)$ for some $m \in \mathbb{N}$. Then $\spac \simeq \mathbb{F}_{q^m}^t$, and hence
    \begin{equation*}
         A_{q}^{SRK}(\mathbf{n},\mathbf{m},d) = A_{q^m}(N,d).
    \end{equation*}
\end{proposition}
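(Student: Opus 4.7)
The plan is to exhibit the map $f$ defined in \eqref{eq:srk_to_Hamming} as an explicit isometric bijection between $\spac$ (equipped with the sum-rank distance) and $\mathbb{F}_{q^m}^t$ (equipped with the Hamming distance); once such an isometry is in hand, the equality of maximum code sizes follows immediately, since any code on one side pulls back to a code of the same cardinality and same minimum distance on the other.

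First, I would observe that since $n_i = 1$ for every $i$, each component $X_i$ lies in $\mathbb{F}_q^{1\times m}$, i.e.\ it is a row vector of length $m$ over $\mathbb{F}_q$. The component map $f^i : \mathbb{F}_q^{1\times m} \to \mathbb{F}_{q^m}$ sends $X_i = ((X_i)_{1,1},\ldots,(X_i)_{1,m})$ to $\sum_{j=1}^m (X_i)_{1,j}\alpha_j$, which is an $\mathbb{F}_q$-linear isomorphism because $\alpha = (\alpha_1,\ldots,\alpha_m)$ is a basis of $\mathbb{F}_{q^m}$ over $\mathbb{F}_q$. Taking the Cartesian product over $i \in [t]$, the global map $f$ is an $\mathbb{F}_q$-linear bijection $\spac \to \mathbb{F}_{q^m}^t$, which in particular gives the claimed isomorphism of vector spaces (and $N = \sum_i n_i = t$).

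Next I would verify that $f$ is an isometry. The key point is that a $1\times m$ matrix has rank $0$ if it is the zero row and rank $1$ otherwise, so for any $\mathbf{X} \in \spac$,
\begin{equation*}
\srk(\mathbf{X}) \;=\; \sum_{i=1}^t \rk(X_i) \;=\; \#\{i \in [t] : X_i \neq 0\} \;=\; \#\{i \in [t] : f^i(X_i)\neq 0\} \;=\; \wt_H(f(\mathbf{X})),
\end{equation*}
where the third equality uses that each $f^i$ is a bijection sending $0$ to $0$. Applying this to $\mathbf{X}-\mathbf{Y}$ and using the $\mathbb{F}_q$-linearity of $f$ gives $\srk(\mathbf{X}-\mathbf{Y}) = \wt_H(f(\mathbf{X})-f(\mathbf{Y})) = d_H(f(\mathbf{X}),f(\mathbf{Y}))$. (Alternatively, one can invoke Theorem~\ref{thm:srk_ham_rr} directly: with $n_i = 1$, the only invertible $1\times 1$ matrices are nonzero scalars, and rescaling a row vector by a nonzero scalar preserves whether it is zero, so the minimum in Theorem~\ref{thm:srk_ham_rr} is attained trivially.)

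Finally, from the existence of this isometric bijection, any sum-rank-metric code $\mathcal{C}\subseteq \spac$ with $\srk(\mathcal{C})\geq d$ corresponds bijectively to a Hamming-metric code $f(\mathcal{C})\subseteq \mathbb{F}_{q^m}^t$ with $d_H(f(\mathcal{C}))\geq d$, and vice versa via $f^{-1}$. Taking maxima on both sides yields $A_q^{SRK}(\mathbf{n},\mathbf{m},d) = A_{q^m}(N,d)$. There is no serious obstacle here; the only thing to be careful about is checking that the rank-to-Hamming identification is exact (not just the inequality \eqref{eq:codeword_leq}), which is precisely where the hypothesis $n_i = 1$ is essential: it forces every nonzero component to have rank exactly $1$, matching the indicator used by the Hamming weight.
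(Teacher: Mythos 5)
Your proof is correct, and the isometry step takes a genuinely more elementary route than the paper's. Where the paper invokes Theorem~\ref{thm:srk_ham_rr} and argues that because the only invertible $1\times 1$ matrices are nonzero scalars the minimum there collapses to $\wt_H(f(\mathbf{X}))$, you bypass that result entirely: you just observe that a $1\times m$ matrix has rank $0$ or $1$, so $\srk(\mathbf{X})=\#\{i : X_i\neq 0\}$, and since each $f^i$ is a linear bijection sending $0$ to $0$ this count equals $\wt_H(f(\mathbf{X}))$. That direct argument is self-contained and arguably cleaner, since it does not lean on a nontrivial external theorem to establish something that is immediate from the hypothesis $n_i=1$; the paper's route has the (minor) advantage of highlighting that Proposition~\ref{prop:all1_allm} sits inside the general framework of Theorem~\ref{thm:srk_ham_rr}. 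You also mention the paper's route in a parenthetical as an alternative, so you effectively have both. One small point of hygiene worth stating explicitly, though you clearly use it: the final equality of optimal code sizes needs not just that $f$ maps codes of minimum distance $\geq d$ to codes of minimum distance $\geq d$, but that $f^{-1}$ does too, which your isometry (an equality of distances, not merely an inequality as in \eqref{eq:codeword_leq}) provides.
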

\begin{proof}
    Consider the map $f$ as in \eqref{eq:srk_to_Hamming}. Note that since all entries of $\mathbf{m}$ are the same, $f$ is bijective (as observed in \cite[Theorem III.1]{byrne_fundamental_2021}). Thus, what remains to show is that $f$ also preserves the weight. That is, for any $\mathbf{X} \in \spac$, we must have that $\srk(\mathbf{X}) = \wt_H(f(\mathbf{X}))$. This we can deduce from Theorem \ref{thm:srk_ham_rr}, as the only invertible $1 \times 1$ matrices are simply scalar multiples of the identity $I$. Hence, if we denote $\mathbf{I} = (I_{n_1}, \hdots, I_{n_t})$, where $I_n$ is the $n \times n$ identity matrix, we have
    \begin{align*}
        \srk(\mathbf{X}) &= \min\{\wt_H(f(\mathbf{X}\mathbf{A}) \mid A_i\in \mathbb{F}^{n_i \times n_i} \text{ invertible for } i \in [t]\}  \\
        &=  \wt_H(f(\alpha \mathbf{X}\mathbf{I}) = \wt_H(f(\mathbf{X})). \qedhere
    \end{align*}
\end{proof}

As a result of Proposition \ref{prop:all1_allm}, we can give a Hamming metric space for which the opposite type of inequality to \eqref{eq:leq_ineq} holds.

\begin{corollary}\label{cor:srk_ham_geq}
    Let $\mathbf{n} = (n_1,\hdots,n_t)$ and $\mathbf{m} = (m_1,\hdots,m_t)$ with $m' = \min\{m_i\}$. Then
    \begin{equation*}
         A_{q}^{SRK}(\mathbf{n},\mathbf{m},d) \geq A_{q^{m'}}(t,d).
    \end{equation*}
\end{corollary}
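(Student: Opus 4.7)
The plan is to construct a sum-rank-metric code in $\mathbb{F}_q^{\mathbf{n}\times \mathbf{m}}$ of size $A_{q^{m'}}(t,d)$ and minimum sum-rank distance at least $d$, by taking an optimal Hamming code over $\mathbb{F}_{q^{m'}}$ of length $t$, identifying it with a sum-rank-metric code via Proposition~\ref{prop:all1_allm}, and then zero-padding each component to fit inside $\mathbb{F}_q^{n_i\times m_i}$.

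\medskip

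First I would apply Proposition~\ref{prop:all1_allm} with $\mathbf{n}' = (1,\ldots,1)$ and $\mathbf{m}' = (m',\ldots,m')$ to obtain a sum-rank-metric code $\mathcal{C}' \subset \mathbb{F}_q^{\mathbf{n}'\times \mathbf{m}'}$ with minimum sum-rank distance $d$ and $|\mathcal{C}'| = A_{q^{m'}}(t,d)$. Next I would define an embedding $\varphi: \mathbb{F}_q^{\mathbf{n}'\times \mathbf{m}'} \to \mathbb{F}_q^{\mathbf{n}\times \mathbf{m}}$ componentwise by sending a $1\times m'$ row vector $X_i'$ to the $n_i\times m_i$ matrix whose top-left $1\times m'$ block equals $X_i'$ and whose remaining entries are zero. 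This is well-defined since $n_i\geq 1$ and $m_i\geq m'$ for every $i\in [t]$.

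\medskip

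The key observation is that for a single component, the rank of $\varphi^i(X_i')$ equals the rank of $X_i'$: indeed, zero-padding cannot change the rank, and a $1\times m'$ vector has rank $0$ or $1$ according to whether it is zero or nonzero. Summing over $i$ gives $\srk(\varphi(\mathbf{X}')) = \srk(\mathbf{X}')$ for every $\mathbf{X}' \in \mathbb{F}_q^{\mathbf{n}'\times \mathbf{m}'}$. Since $\varphi$ is $\mathbb{F}_q$-linear, it follows that for any two codewords $\mathbf{X}',\mathbf{Y}' \in \mathcal{C}'$ we have
\begin{equation*}
    \srk(\varphi(\mathbf{X}') - \varphi(\mathbf{Y}')) = \srk(\varphi(\mathbf{X}' - \mathbf{Y}')) = \srk(\mathbf{X}' - \mathbf{Y}') \geq d,
\end{equation*}
so in particular $\varphi$ is injective on $\mathcal{C}'$ and $\varphi(\mathcal{C}')$ is a sum-rank-metric code of the same size and minimum distance at least $d$. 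Therefore $A_q^{SRK}(\mathbf{n},\mathbf{m},d) \geq |\varphi(\mathcal{C}')| = A_{q^{m'}}(t,d)$.

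\medskip

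There is essentially no hard step here: the only thing to verify carefully is the rank-preservation of the zero-padding, which is immediate for row vectors. The real content of the corollary lies in Proposition~\ref{prop:all1_allm}, which was already established; the embedding argument just extends that isomorphism to give a lower bound whenever the component matrices are at least as large as those produced by the minimal basis construction.
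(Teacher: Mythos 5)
Your proof is correct and follows essentially the same route as the paper: apply Proposition~\ref{prop:all1_allm} to convert the optimal Hamming code to a sum-rank code in $\mathbb{F}_q^{\mathbf{n}'\times\mathbf{m}'}$, then embed it into $\mathbb{F}_q^{\mathbf{n}\times\mathbf{m}}$ (the paper phrases this as ``$\mathbb{F}_q^{\mathbf{n}'\times\mathbf{m}'}$ is contained in $\mathbb{F}_q^{\mathbf{n}\times\mathbf{m}}$''). You simply spell out the zero-padding embedding and its rank-preservation more explicitly than the paper does.
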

\begin{proof}
    Consider a code $C \subset \mathbb{F}_{q^{m'}}^t$. Let $\mathbf{n'} = (1,\hdots,1)$ and $\mathbf{m'} = (m',\hdots,m')$ then by Proposition \ref{prop:all1_allm}, we find a code $C' \in \spacc{n'}{m'}{q}$ with $d_H(C) = \srk(C')$ and $\abs{C} = \abs{C'}$. Then since $\spacc{n'}{m'}{q}$ is contained in $\spac$, $C'$ induces a code $C''$ in $\spac$ with $\srk(C') = \srk(C'')$ and $\abs{C'} = \abs{C''}$.
\end{proof}

We will use these various ways the Hamming metric and sum-rank metric are connected in the following Sections \ref{sec:asymp_GV} and \ref{sec:SRK_Ramsey}. 

\section{Asymptotic improvements on the GV bound}\label{sec:asymp_GV}

As already mentioned, the GV bound is a classical bound in coding theory, which can be derived for any coding metric. This was first done for sum-rank-metric codes in \cite{byrne_fundamental_2021}.

\begin{theorem}{[Gilbert-Varshamov bound, \cite[Theorem III.11]{byrne_fundamental_2021}]}\label{thm:GV_SRK}
    For a sum-rank-metric space $\spac$, let $V_q(\mathbf{n},\mathbf{m},k)$ be the volume of the ball of radius $k$ around a codeword. Then
    \begin{equation*}
        A_q^{SRK}(\mathbf{n},\mathbf{m},k+1) \geq \frac{\abs{V}}{V_q(\mathbf{n},\mathbf{m},k)}.
    \end{equation*}
\end{theorem}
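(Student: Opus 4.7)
The plan is to prove this via the classical sphere-covering (greedy) argument, which in the paper's graph-theoretic language reduces to the trivial lower bound on the independence number of a regular graph. I would exploit Proposition \ref{prop:Aq_alpha}, which identifies $A_q^{SRK}(\mathbf{n},\mathbf{m},k+1)$ with $\alpha(\Gamma(\spac)^k)$, so the task becomes lower-bounding an independence number.

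First I would observe that the sum-rank distance is translation-invariant: for any $\mathbf{Z} \in \spac$, the shift $\mathbf{X} \mapsto \mathbf{X} + \mathbf{Z}$ preserves $\srk(\mathbf{X}-\mathbf{Y})$. Consequently $\Gamma(\spac)$ is a Cayley graph, hence vertex-transitive, and therefore so is its power $\Gamma(\spac)^k$. In particular $\Gamma(\spac)^k$ is regular, and the degree of every vertex equals the number of nonzero elements at sum-rank distance at most $k$ from it, namely $V_q(\mathbf{n},\mathbf{m},k) - 1$, independent of the chosen vertex.

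Next I would apply the elementary bound $\alpha(G) \geq |V(G)|/(D(G)+1)$, which holds for any graph $G$ with maximum degree $D(G)$. Combined with the degree count above and Proposition \ref{prop:Aq_alpha}, this yields
\begin{equation*}
A_q^{SRK}(\mathbf{n},\mathbf{m},k+1) \;=\; \alpha(\Gamma(\spac)^k) \;\geq\; \frac{|V|}{V_q(\mathbf{n},\mathbf{m},k)},
\end{equation*}
which is exactly the desired inequality. Equivalently, one can give a direct greedy construction: choose any element of $\spac$ as a codeword, delete its closed sum-rank ball of radius $k$, and iterate; each step removes at most $V_q(\mathbf{n},\mathbf{m},k)$ elements while producing one new codeword, and the resulting set automatically has minimum sum-rank distance at least $k+1$.

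There is essentially no obstacle here: the only point requiring any care is verifying that the ball volume is the same around every codeword, and this is immediate from translation invariance. Indeed this is precisely why the GV bound is regarded as a baseline in coding theory, and it is the very bound the later sections of the paper aim to improve by exploiting finer local structure (\emph{local density}) of the graph $\Gamma(\spac)^k$ rather than just its regularity.
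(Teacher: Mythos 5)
Your proposal is correct and matches the approach the paper itself sketches immediately after stating the theorem: it cites the graph-theoretic sphere-covering bound $\alpha(G) \geq |V|/(D+1)$ and notes that $\Gamma(\spac)^k$ is vertex-transitive (via the Cayley structure in Proposition~\ref{prop:Cayley}), so its degree is $V_q(\mathbf{n},\mathbf{m},k)-1$, and Proposition~\ref{prop:Aq_alpha} then yields the bound. Both the graph-theoretic lower bound on $\alpha$ and the equivalent direct greedy construction you mention are the standard routes, and your degree count and translation-invariance check are exactly the points that need verifying.
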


In this section, we will give an asymptotic improvement of Theorem \ref{thm:GV_SRK} for some sum-rank metrics through a graph theoretical approach. In fact, we will show the stronger result that $\spac$ can be partitioned into sum-rank-metric codes with average size larger than the above GV bound.

Much like for coding metrics, there exists a sphere covering bound for graphs as well. Indeed, if we consider the volume of the ball of radius $1$ around a vertex, then this will be bounded by $D+1$. Hence, by a sphere covering argument we obtain
\begin{equation}\label{eq:graph_sphere_covering}
    \alpha(G) \geq \frac{\abs{V}}{D+1},
\end{equation}
where $D$ denotes the maximum degree of $G$. In \cite{AKS} the bound from \eqref{eq:graph_sphere_covering} was improved by a factor of $\Omega(\log D)$ for triangle-free graphs. This was later extended to graphs that contain relatively few triangles as follows.

\begin{theorem}\cite[Lemma 15]{bollobasbook}, \label{thm:alpha_logbound}
    Let $G = (V,E)$ be a graph with maximum degree $D$, and suppose that $G$ contains no more than $\Delta$ triangles. Then, 
\begin{equation*}
    \alpha(G) \geq \frac{\abs{V}}{10D}\left( \log_2 D - \frac{1}{2}\log_2\left(\frac{\Delta}{\abs{V}}\right)\right).
\end{equation*}
\end{theorem}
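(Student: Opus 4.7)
The plan is to use the probabilistic method in the spirit of Ajtai, Koml\'os, and Szemer\'edi. The key observation is that Shearer proved the $\log D$ improvement over \eqref{eq:graph_sphere_covering} for \emph{triangle-free} graphs, so for a graph $G$ containing only $\Delta$ triangles I expect that a sparsely sampled random induced subgraph of $G$ will be essentially triangle-free, allowing Shearer's bound to be applied on the sampled subgraph and then lifted back to $G$.

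First, I would include each vertex of $V$ independently with probability $p$ (to be optimised later) to obtain a random set $T$. By linearity, $\mathbb{E}[|T|] = p|V|$ and the expected number of triangles of $G$ surviving in $G[T]$ is $p^3 \Delta$. Next I would delete one vertex from each surviving triangle to produce a triangle-free induced subgraph $G[T']$ with $|T'| \geq |T| - (\#\text{triangles in }G[T])$. Then I would apply Shearer's bound to $G[T']$: since $G[T']$ inherits maximum degree at most $D$ from $G$, this yields $\alpha(G[T']) \geq c |T'|\,\log D / D$ for an absolute constant $c$. Taking expectations,
\[
\alpha(G) \;\geq\; \mathbb{E}\bigl[\alpha(G[T'])\bigr] \;\geq\; \frac{c\log D}{D}\bigl(p|V| - p^3 \Delta\bigr).
\]

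To finish I would optimise $p$. Setting $p \sim \sqrt{|V|/\Delta}$ (capped at $1$) makes the triangle-deletion loss at most a constant fraction of $|T|$, and after passing from natural to base-$2$ logarithms and absorbing constants one recovers the stated bound $\frac{|V|}{10D}\bigl(\log_2 D - \tfrac{1}{2}\log_2(\Delta/|V|)\bigr)$. The additive correction $-\tfrac{1}{2}\log_2(\Delta/|V|)$ arises precisely because $p^2 \sim |V|/\Delta$ feeds into $\log(pD) = \log D + \tfrac{1}{2}\log(|V|/\Delta)$, which is where a single Shearer-style $\log$ factor is split between the parameters $D$ and $\Delta/|V|$.

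The main obstacle is that Shearer's triangle-free bound requires a uniform upper bound on the maximum degree of $G[T']$, whereas the sampling argument controls this degree only in expectation. I expect that this can be handled either by Chernoff concentration for the degree of each surviving vertex, followed by a union bound or a further pruning step, or by invoking a local/fractional version of Shearer's lemma that sums a Caro--Wei-type function over vertex degrees rather than using a single global $D$. The constant $10$ in the statement is almost certainly not tight and should be viewed as absorbing the accumulated slack in the sampling step, the pruning step, and the base-change of logarithms.
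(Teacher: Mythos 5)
The paper does not prove this lemma; it is quoted from Bollob\'as's \emph{Random Graphs}, so there is no in-paper argument to compare against. Your strategy---sample each vertex independently with probability $p$, delete a vertex from each surviving triangle, apply Shearer's triangle-free bound to what remains, and optimise $p\sim\sqrt{|V|/\Delta}$---is the standard route to this result, and you correctly flag degree control of the sampled subgraph as the key technical issue.

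The problem is that your central displayed inequality,
\begin{equation*}
\alpha(G)\;\geq\;\mathbb{E}\bigl[\alpha(G[T'])\bigr]\;\geq\;\frac{c\log D}{D}\bigl(p|V|-p^3\Delta\bigr),
\end{equation*}
is wrong as written and contradicts your own closing paragraph. If you apply Shearer with the ambient maximum degree $D$ and then optimise $p$, you get a bound of order $\tfrac{|V|\log D}{D}\sqrt{|V|/\Delta}$, which carries a \emph{power} of $\Delta/|V|$ where the theorem has a \emph{logarithm}; that chain cannot produce the stated estimate, no matter how constants are absorbed. As you yourself observe at the end, the factor that must appear is $\log(pD)/(pD)$, and to earn it you need a third, explicit pruning step: remove from $T$ every vertex whose degree in $G[T]$ exceeds $4pD$. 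For this, Chernoff and fractional Shearer lemmas are more than is needed. Conditioned on $v\in T$, the degree of $v$ in $G[T]$ is binomial with mean at most $pD$, so Markov's inequality already shows each sampled vertex is pruned with probability at most $1/4$. With $p=\tfrac12\sqrt{|V|/\Delta}$ one checks $\mathbb{E}|T'|\geq \tfrac12\,p|V|$, the residual graph $G[T']$ is triangle-free with maximum degree at most $4pD$, and Shearer then gives $\alpha(G)\geq c'\,\tfrac{|V|}{D}\log(pD)$. Since $\log_2(pD)=\log_2 D-\tfrac12\log_2(\Delta/|V|)-1$ and the bound $\Delta\leq|V|D^2/6$ keeps $\log_2 D-\tfrac12\log_2(\Delta/|V|)$ bounded away from zero, the theorem follows with the stated constant absorbing the slack. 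So the idea is right; the display must be corrected to use $\log(pD)/(pD)$, the degree pruning must be carried out explicitly, and Markov is the appropriate tool for it.
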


The bound from Theorem \ref{thm:alpha_logbound} is the main tool we will use for our GV bound improvement. In particular, if for some positive $\varepsilon \leq 2$ we have $\frac{\Delta}{\abs{V}} \leq D^{2-\varepsilon}$ for the graph $\Gamma(\spac)^k$, then the bound from Theorem \ref{thm:alpha_logbound} improves the classic Gilbert-Varshamov bound from Theorem \ref{thm:GV_SRK} by a multiplicative factor of $\log_2D$.

Moreover, in order to obtain the stronger result on partitioning $\spac$ into sum-rank-metric codes, we will also make use of the following upper bound on the chromatic number of a graph, which extends Theorem \ref{thm:alpha_logbound} to a bound on the chromatic number.

\begin{theorem}\cite[Theorem 1]{AKSchromatic}\label{thm:chi_logbound}
    There exist an absolute positive constant $c$ such that the following holds. For any graph $G = (V,E)$ with maximum degree $D$ and at most $D^2/f$ edges in the neighborhood of any vertex $v$ for 
    some $f$, it holds that
    \begin{equation*}
        \chi(G) \leq c \frac{D}{\log f}.
    \end{equation*}
\end{theorem}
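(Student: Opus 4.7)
The plan is to derive the chromatic-number bound by combining the independence-number bound of Theorem \ref{thm:alpha_logbound} with a semi-random coloring procedure. The first observation is that the local edge condition implies a global triangle bound: each triangle contributes one edge to the neighborhood of each of its three vertices, so summing the hypothesis over all vertices gives $3\Delta \leq |V| D^2 / f$, that is, $\Delta / |V| \leq D^2/(3f)$. Plugging this into Theorem \ref{thm:alpha_logbound} and simplifying cancels the $\log D$ terms and leaves an independent set of size $\Omega(|V| \log f / D)$, which is the starting point.

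The naive strategy of iteratively extracting such a large independent set and deleting it would use $O(D \log |V| / \log f)$ colors, off by a factor of $\log |V|$ from the target, since each round peels off only a $\Theta(\log f / D)$ fraction of the remaining vertex set and $\Theta(\log |V|)$ rounds are therefore needed. To eliminate this slack I would use the semi-random \emph{nibble} method. Fix a palette of size $t = \Theta(D / \log f)$; at each nibble step, activate each still-uncolored vertex independently with a small probability, assign each activated vertex a uniformly random color from the palette, and retain its color only when it conflicts with no previously colored neighbor. One then tracks the maximum degree and the number of edges inside each neighborhood of the subgraph induced on the uncolored vertices, and shows that after the nibble both parameters shrink at essentially the same rate, so a condition analogous to the hypothesis of the theorem persists relative to the new maximum degree.

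The main obstacle is the concentration analysis that justifies these invariants. After each nibble, one must show with positive probability that every uncolored vertex has (i) lost close to the expected fraction of its neighbors and (ii) kept a neighborhood that is still sparse relative to its reduced degree. Because the relevant random variables depend on many correlated coin flips in the nibble, standard Chernoff bounds do not apply; one instead uses Talagrand's inequality on the color-assignment product space to get sharp tails for the per-vertex quantities, and then invokes the Lov\'asz Local Lemma to union-bound over the $|V|$ bad events of an individual vertex failing either condition. After $O(\log \log |V|)$ nibble rounds the residual graph has maximum degree small enough that a direct greedy coloring finishes within the remaining budget, so the total number of colors is $O(D / \log f)$, yielding the bound.
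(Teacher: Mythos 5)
The paper does not prove this statement; it cites it directly from Alon, Krivelevich, and Sudakov \cite{AKSchromatic}, so there is no in-paper proof to compare against. Your outline is, at a high level, the right family of ideas: the derivation $\Delta/|V|\le D^2/(3f)$ from the per-vertex hypothesis is correct, the observation that naive iterated extraction of independent sets loses a $\log|V|$ factor is exactly the motivation, and the semi-random nibble with Talagrand-type concentration and the Lov\'asz Local Lemma is indeed the engine behind Johansson's triangle-free bound and the Alon--Krivelevich--Sudakov extension.

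However, the invariant you propose to maintain is not the one that makes the argument close, and as stated it would not carry through. You track the maximum degree and the edge-density of neighborhoods in the subgraph induced on the still-uncolored vertices, hoping that ``a condition analogous to the hypothesis persists relative to the new maximum degree.'' The difficulty is that the residual induced subgraph need not inherit the sparse-neighborhood property at the right scale: uncolored degree does not shrink fast enough to matter, and sparsity of an \emph{induced} subgraph is not what controls whether a vertex can still be colored. What the actual argument tracks, per vertex, is the size of its remaining \emph{palette} (colors not yet claimed by a colored neighbor) and the number of uncolored neighbors competing for each surviving color; the sparsity hypothesis enters to show that a typical color is taken by at most a few neighbors, so palettes decay roughly geometrically rather than collapsing. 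Without the palette bookkeeping there is no quantity to which the Local Lemma can be applied and no mechanism by which the density hypothesis buys you the extra $\log f$ factor. You also assert $O(\log\log|V|)$ rounds, which is not the correct scaling; the number of nibble rounds is governed by the geometric decay of palette sizes and is of order $\log D$ (up to the stopping threshold), not doubly logarithmic. To turn your sketch into a proof you would need to replace your two invariants with the palette-size and per-color-competition invariants, re-derive the expected one-round changes of those quantities from the sparsity hypothesis, and only then invoke Talagrand and the Local Lemma; the concentration step itself is fine in spirit but is being applied to the wrong random variables.
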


We note that Theorem \ref{thm:chi_logbound} also implies a logarithmic strengthening of the GV bound. This follows directly when $f = D^\varepsilon$, using the classical inequality $\alpha(G) \geq \frac{\abs{V}}{\chi(G)}$.

In order to show that $\Gamma(\spac)^k$ satisfies the condition to apply Theorems \ref{thm:alpha_logbound} and \ref{thm:chi_logbound}, we first need a few things to make it easier to work with. In particular, we want $\Gamma(\spac)^k$ to be vertex-transitive, as in this case $3\frac{\Delta}{\abs{V}} = T$, where $T$ denotes the number of edges in a neighborhood of an arbitrary vertex. We will show this property by proving that $\Gamma(\spac)^k$ is a Cayley graph, similarly as how it was done for $k=1$ in \cite[Proposition 10]{AAA2024}. Recall that a graph is a \emph{Cayley graph} over a group $G$ with connecting set $S \subset G$ if the vertices of the graph correspond to the elements of $G$, such that two vertices $x,y$ are adjacent in the graph if and only if there is an element $s \in S$ for which $x+s = y$. Note that if $S$ is symmetric then the graph is undirected, and the Cayley graphs we consider will be generated by symmetric sets.
\begin{proposition}\label{prop:Cayley}
    The graph $\Gamma(\spac)^k$ is a Cayley graph over the additive group $\spac$ with connecting set $S = \{\mathbf{X} \in \spac \mid \srk(\mathbf{X}) \in \{1,\hdots,k\}\}$. In particular, $\Gamma(\spac)^k$ is vertex-transitive.
\end{proposition}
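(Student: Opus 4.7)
The plan is to verify directly that $\Gamma(\spac)^k$ matches the Cayley graph construction over $(\spac, +)$ with the given connecting set $S$, and then invoke the standard fact that Cayley graphs are vertex-transitive.

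First I would recall the characterization of adjacency in $\Gamma(\spac)^k$ already stated in the paper: $\mathbf{X}$ and $\mathbf{Y}$ are adjacent in $\Gamma(\spac)^k$ if and only if $1 \leq \srk(\mathbf{X} - \mathbf{Y}) \leq k$. This uses the observation, cited from \cite[Proposition 4.3]{byrne2022anticodes}, that the geodesic distance in $\Gamma(\spac)$ equals the sum-rank distance. This is the only nontrivial input.

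Next I would check that $S$ is a legitimate Cayley connecting set. Since $\rk(-X_i) = \rk(X_i)$ for every component, we have $\srk(-\mathbf{Z}) = \srk(\mathbf{Z})$, so $S = -S$ is symmetric; and $\srk(\mathbf{0}) = 0 \notin \{1,\dots,k\}$, so $\mathbf{0} \notin S$. Now the adjacency in $\Gamma(\spac)^k$ can be rewritten: $\mathbf{X}$ and $\mathbf{Y}$ are adjacent iff $\mathbf{Y} - \mathbf{X} \in S$, which is exactly the defining adjacency relation of the Cayley graph $\operatorname{Cay}(\spac, S)$.

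Finally, vertex-transitivity follows from the general fact that for any Cayley graph over a group $G$, left translation $\mathbf{Z} \mapsto \mathbf{Z} + \mathbf{X}_0$ is a graph automorphism for every $\mathbf{X}_0 \in G$ (since $(\mathbf{Y}+\mathbf{X}_0) - (\mathbf{X}+\mathbf{X}_0) = \mathbf{Y}-\mathbf{X}$ lies in $S$ iff $\mathbf{Y}-\mathbf{X}$ does), and these translations act transitively on the vertex set. There is essentially no obstacle here; the proof is really just a bookkeeping argument, and the only step requiring the coding-theoretic content is the identification of geodesic distance with sum-rank distance, which is already available from Section~\ref{sec:preliminaries}.
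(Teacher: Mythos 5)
Your proof is correct and follows essentially the same route as the paper's: both reduce adjacency in $\Gamma(\spac)^k$ to the condition $1 \leq \srk(\mathbf{X}-\mathbf{Y}) \leq k$ via the identification of geodesic distance with sum-rank distance (\cite[Proposition 4.3]{byrne2022anticodes}), and then match this with the Cayley adjacency rule $\mathbf{Y}-\mathbf{X}\in S$. The only difference is cosmetic: you explicitly note that $S$ is symmetric and excludes $\mathbf{0}$ and spell out the translation automorphisms, whereas the paper leaves those routine checks implicit.
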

\begin{proof}
    The proof is analogous to \cite[Proposition 10]{AAA2024}. Consider two adjacent vertices $\mathbf{X}$ and $\mathbf{Y}$ in $\Gamma(\spac)^k$, these will be at distance at most $k$ in the original graph $\Gamma(\spac)$. Recall, by \cite[Proposition 4.3]{byrne2022anticodes}, the geodesic distance in $\Gamma(\spac)$ is equal to the sum-rank-metric weight of $\mathbf{X}-\mathbf{Y}$, hence $\srk(\mathbf{X}-\mathbf{Y}) \in \{1,\hdots,k\}$. Thus $\mathbf{X}-\mathbf{Y} \in S$. Now take a $\mathbf{Z} \in S$ and consider the vertices $\mathbf{X}$ and $\mathbf{X}+\mathbf{Z}$. Clearly $\srk(\mathbf{X}+\mathbf{Z}-\mathbf{X}) = \srk(\mathbf{Z}) \in \{1,\hdots,k\}$. Hence, we know the distance between $\mathbf{X}+\mathbf{Z}$ and $\mathbf{X}$ in $\Gamma(\spac)$ is at most $k$, and subsequently $\mathbf{X}+\mathbf{Z}$ and $\mathbf{X}$ must be adjacent in $\Gamma(\spac)^k$.
\end{proof}

Since the power of the sum-rank-metric graph $\Gamma(\spac)^k$ is vertex-transitive, it is also regular. In the sequel, denote by $D$ the degree of regularity of $\Gamma(\spac)^k$. Note that in general, the expression for $D$ is quite involved, see \cite[Lemma III.5]{byrne_fundamental_2021}. Since our graph is vertex-transitive, we have $3\frac{\Delta}{\abs{V}} = T$, and thus we only need to find an $\varepsilon > 0 $ such that $T \leq D^{2-\varepsilon}$, as we can then apply Theorem \ref{thm:alpha_logbound} and Theorem \ref{thm:chi_logbound}. In the rest of this section, we present two approaches to obtain this inequality between $T$ and $D$ for two different kind of asymptotic versions of the sum-rank metric. First, we will consider sum-rank metrics which grow in the number of components, then we will consider sum-rank metrics where one of the component matrices grows in size.

\subsection{Asymptotic improvement in the number of components}
In this section, we will consider an asymptotic improvement for the GV bound when $t$ (the number of components of the sum-rank metric) goes to infinity.

Rather than showing $T \leq D^{2-\varepsilon}$ directly, we will use the fact that this has already been proven for the Hamming case in \cite{VW}. In section \ref{sec:ham_srk_connection}, we saw that the sum-rank metric and Hamming metric can be related to each other. In particular, we will make use of Proposition \ref{prop:all1_allm}.

Define the \emph{Hamming graph} as $H(t,q^m) = \Gamma(\spacc{\mathbf{1}}{\mathbf{m}}{q})$, with $\mathbf{1} = (1,\hdots,1)$ and $\mathbf{m} = (m,\hdots,m)$, both of which are of length $t$.

\begin{lemma}\cite{VW}\label{lem:DH_TH_ineq}
    Let $D_H$ denote the regularity of $H(t,q^m)^k$, and let $T_H$ be the number of edges in a neighborhood of $H(t,q^m)^k$. Let $\alpha,\alpha'$ be two constants satisfying $0 < \alpha' < \alpha < \frac{q^m-1}{q^m}$. Then for any $\alpha't \leq k \leq \alpha t$ with $t$ sufficiently large, there exists a positive $\varepsilon \leq 2$ for which $T_H \leq D_H^{2-\varepsilon}$.
\end{lemma}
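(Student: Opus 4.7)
The plan is to follow the Vu--Wu entropy/counting strategy. Using that $H(t,q^m)^k$ is a Cayley graph on the additive group $\mathbb{F}_{q^m}^t$ (as in Proposition~\ref{prop:Cayley}) and hence vertex-transitive, I may fix $v=0$ and write $D_H = |B(0,k)\setminus\{0\}|$ and $2T_H = |\{(y,z): y,z \in B(0,k)\setminus\{0\},\, y\neq z,\, d_H(y,z)\leq k\}|$, where $B(0,k)$ denotes the Hamming ball of radius $k$ in $\mathbb{F}_{q^m}^t$.

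First, I would compute the leading-order asymptotics of $D_H = \sum_{i=1}^k \binom{t}{i}(q^m-1)^i$. Since $\alpha' t \leq k \leq \alpha t$ with $\alpha < (q^m-1)/q^m$, the summand at $i=k$ dominates, and Stirling gives
\begin{equation*}
\tfrac{1}{t}\log D_H \;\longrightarrow\; h(\beta) + \beta\log(q^m-1),
\end{equation*}
uniformly for $\beta := k/t \in [\alpha',\alpha]$, where $h$ denotes the binary entropy function.

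Next, I would stratify pairs $(y,z)$ by $i := \wt(y)$, $j := \wt(z)$, $s := |\supp(y)\cap\supp(z)|$, and $a := |\{p \in \supp(y)\cap\supp(z) : y_p = z_p\}|$. A short inspection gives $d_H(y,z) = i+j-s-a$, and the number of pairs with these parameters is a product of a multinomial coefficient in $\{s,\, i-s,\, j-s,\, t-i-j+s\}$ and $\binom{s}{a}$, together with powers of $(q^m-1)$ and $(q^m-2)$. Applying Stirling termwise, $\tfrac{1}{t}\log T_H$ converges, uniformly in $\beta$, to the supremum of a concave functional $F(\beta_1,\beta_2,\sigma,\tau)$ over the polytope cut out by $0 \leq \sigma \leq \min(\beta_1,\beta_2)$, $0 \leq \tau \leq \sigma$, $\beta_1,\beta_2 \leq \beta$, together with the critical distance constraint $\beta_1 + \beta_2 - \sigma - \tau \leq \beta$.

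The heart of the argument is to show this constrained supremum is strictly smaller than $2\bigl(h(\beta)+\beta\log(q^m-1)\bigr)$. The maximum of $F$ subject only to the weight constraints $\beta_1, \beta_2 \leq \beta$ is attained at $\beta_1=\beta_2=\beta$, $\sigma=\beta^2$, $\tau=\beta^2/(q^m-1)$, at which the distance coordinate equals $2\beta - \beta^2 q^m/(q^m-1)$; this strictly exceeds $\beta$ exactly when $\beta < (q^m-1)/q^m$. Thus under the hypothesis $\alpha < (q^m-1)/q^m$ the distance constraint is strictly violated at the unconstrained optimum, so by strict concavity of $F$ adjoining this constraint strictly reduces the maximum. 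Compactness of $\beta \in [\alpha',\alpha]$ yields a uniform positive gap, giving $\varepsilon > 0$ with $T_H \leq D_H^{2-\varepsilon}$ for all $t$ sufficiently large. The main obstacle is this final concavity/Lagrange-multiplier step: one must carefully identify the unconstrained maximizer, verify that the distance constraint is strictly active there, and extract a gap that is uniform over the compact range $[\alpha',\alpha]$.
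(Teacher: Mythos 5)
The paper does not give its own proof of this lemma; it cites it directly from \cite{VW}. Your sketch is a correct reconstruction of the Vu--Wu counting strategy: fix a vertex by transitivity, stratify the pairs in the punctured ball by weight, support-intersection size $s$, and agreement count $a$, verify $d_H(y,z)=i+j-s-a$, pass to normalized parameters and entropy asymptotics, and observe that the unconstrained optimum (over $\beta_1,\beta_2\le\beta$ with no distance cap) is attained at the ``random-pair'' values $\beta_1=\beta_2=\beta$, $\sigma=\beta^2$, $\tau=\beta^2/(q^m-1)$, where the distance coordinate equals $2\beta-\beta^2 q^m/(q^m-1)>\beta$ exactly when $\beta<(q^m-1)/q^m$. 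That is precisely the computation underlying the cited lemma, and the resulting codimension-one defect gives the uniform $\varepsilon$ after taking a compactness minimum over $\beta\in[\alpha',\alpha]$.

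Two places you should be more careful if you want a complete argument. First, the phrase ``by strict concavity of $F$ adjoining this constraint strictly reduces the maximum'' is not quite enough as stated: the function involves terms such as $\sigma\, h(\tau/\sigma)$ which are concave but not strictly concave in every direction, so you should instead argue uniqueness of the unconstrained maximizer (e.g.\ via strict concavity of the four-block entropy on its simplex together with concentration of the hypergeometric/agreement parameters) and then invoke continuity and compactness of the constrained region to produce a positive gap; this gap varies continuously with $\beta$ and hence is bounded below on $[\alpha',\alpha]$. Second, when $q^m=2$ the factor $q^m-2$ vanishes, forcing $a=s$ and collapsing the $\tau$ variable; your parametrization degenerates there, and one should treat it as the boundary (Jiang--Vardy) case rather than apply the generic Lagrangian picture. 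Finally, remember that the stratification has only polynomially many cells, so $\tfrac1t\log T_H$ agrees with the maximal cell up to $O(\log t/t)$, an error absorbed into $\varepsilon$. With those points firmed up, your plan is essentially the argument from \cite{VW}.
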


To apply Lemma \ref{lem:DH_TH_ineq} to our case, we make use of a similar approach as was used to prove this lemma originally. Namely, we use the notion of polynomial equivalence.

Let $f$ and $g$ be two functions in $n$. We say $f$ and $g$ are \emph{polynomially equivalent}, denoted by $f \sim g$, if there are positive constants $c_1,c_2$ such that
\begin{equation*}
    n^{-c_1}f \leq g \leq n^{c_2}f.
\end{equation*}

In particular, if we can show $D \sim D_H$ and $T \sim T_H$, then by Lemma \ref{lem:DH_TH_ineq} (and the fact that $D$ and $T$ are exponential in $t$), we can use Theorem \ref{thm:alpha_logbound} and Theorem \ref{thm:chi_logbound} to obtain the following result:

\begin{theorem}\label{thm:srk_asymp_gv}
    Let $\mathbf{n} = (n_1,\hdots),\mathbf{m} = (m_1,\hdots)$ be two infinite sequences of $\mathbb{N}$ valued functions of $t$, and let $t^*$ be another $\mathbb{N}$ valued function of $t$ such that:
    \begin{itemize}
        \item $m_i \geq n_i$ for all $i,t \in \mathbb{N}$.
        \item $n_i = 1$ and $m_i = m'$ if $i > t^*$ for all $t \in \mathbb{N}$, where $m'$ is a function in $t$.
        \item $n = \max\{n_i : i \in \mathbb{N}\}$ and $m = \max\{m_i : i \in \mathbb{N}\}$ are well defined functions of $t$.
        \item $mnt^* = O(\log_q t)$.
    \end{itemize}
    Define $\mathbf{n_t} = (n_1,\hdots,n_t),\mathbf{m_t} = (m_1,\hdots,m_t)$. Let $\alpha,\alpha'$ be two constants satisfying $0 < \alpha' < \alpha < \frac{q^{m'}-1}{q^{m'}}$. Then, for sufficiently large $t$, there is a positive constant $\varepsilon \leq 2$ depending on $q^{m'}$ and $\alpha$ such that for any $\alpha't \leq k \leq \alpha t$ we have
    \begin{equation*}
        A_q^{SRK}(\mathbf{n_t},\mathbf{m_t},k+1) \geq \varepsilon\frac{\abs{V}}{20D}\log_2 D.
    \end{equation*}
    In fact, the whole space $\spacc{n_t}{m_t}{q}$ can be partitioned into sum-rank-metric codes of minimum distance $k$, such that the average size of these codes is the above bound.
\end{theorem}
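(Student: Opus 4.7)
The plan is to verify the hypothesis $T \leq D^{2-\varepsilon}$ for the graph $G = \Gamma(\spacc{n_t}{m_t}{q})^k$, where $T$ denotes the number of edges in the neighborhood of any vertex and $D$ its regularity, so that Theorem \ref{thm:alpha_logbound} and Theorem \ref{thm:chi_logbound} can be invoked. By Proposition \ref{prop:Cayley}, $G$ is a Cayley graph on the additive group $\spacc{n_t}{m_t}{q}$, hence vertex-transitive and regular; vertex-transitivity yields the identity $T = 3\Delta/\abs{V}$, so it suffices to control $T$ in terms of $D$.

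The strategy is to reduce to the Hamming case handled in \cite{VW}. Writing each $\mathbf{X} \in \spacc{n_t}{m_t}{q}$ as $(\mathbf{X}^h, \mathbf{X}^\tau)$ with head $\mathbf{X}^h \in \prod_{i\leq t^*} \F_q^{n_i\times m_i}$ and tail $\mathbf{X}^\tau \in \prod_{i>t^*} \F_q^{1\times m'}$, Proposition \ref{prop:all1_allm} identifies $\mathbf{X}^\tau$ with an element of $\F_{q^{m'}}^{t-t^*}$ whose Hamming weight equals the sum-rank weight of $\mathbf{X}^\tau$. The head factor has cardinality at most $q^{nmt^*}$, which by the assumption $mnt^* = O(\log_q t)$ is polynomial in $t$. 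Decomposing sum-rank balls accordingly yields
\[
D_H \leq D \leq q^{2nmt^*}\, D_H,
\]
and an analogous decomposition of pairs $(\mathbf{X},\mathbf{Y})$ with $\srk(\mathbf{X}),\srk(\mathbf{Y}),\srk(\mathbf{X}-\mathbf{Y}) \leq k$ gives $T_H \leq T \leq q^{4nmt^*}\, T_H$, where $D_H$ and $T_H$ are the corresponding quantities of $H(t-t^*, q^{m'})^k$; a routine binomial comparison using $t^* = O(\log t)$ shows these are polynomially equivalent to their $H(t,q^{m'})^k$ counterparts. Hence $D \sim D_H$ and $T \sim T_H$ in $t$, while both $D$ and $D_H$ are exponential in $t$ when $k=\Theta(t)$. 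Therefore the bound $T_H \leq D_H^{2-\varepsilon}$ supplied by Lemma \ref{lem:DH_TH_ineq} (for $\alpha' t \leq k \leq \alpha t$ and $\alpha < (q^{m'}-1)/q^{m'}$) transfers to $T \leq D^{2-\varepsilon'}$ for some $\varepsilon' > 0$ depending on $q^{m'}$ and $\alpha$, for all sufficiently large $t$.

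With this in hand, Theorem \ref{thm:alpha_logbound} applied to $G$, using $\Delta/\abs{V} = T/3 \leq D^{2-\varepsilon'}/3$, gives
\[
\alpha(G) \geq \frac{\abs{V}}{10D}\left(\log_2 D - \tfrac{1}{2}\log_2(D^{2-\varepsilon'}/3)\right) \geq \frac{\varepsilon'}{20}\cdot\frac{\abs{V}}{D}\log_2 D,
\]
which combined with Proposition \ref{prop:Aq_alpha} produces the stated lower bound on $A_q^{SRK}(\mathbf{n_t}, \mathbf{m_t}, k+1)$. For the partitioning statement, I would apply Theorem \ref{thm:chi_logbound} with $f = D^{\varepsilon'}$ to obtain $\chi(G) = O(D/\log D)$; the color classes of a proper coloring partition $\spacc{n_t}{m_t}{q}$ into independent sets, which are precisely sum-rank-metric codes of minimum distance at least $k+1$, and whose average size is at least $\abs{V}/\chi(G) = \Omega(\abs{V}\log D/D)$, matching the claimed asymptotic bound.

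I expect the main obstacle to be a clean verification of the polynomial equivalence $T \sim T_H$. While the decomposition of a single sum-rank ball into head and tail parts is routine, pairs $(\mathbf{X},\mathbf{Y})$ constrained by $\srk(\mathbf{X}-\mathbf{Y}) \leq k$ mix the head and tail contributions, so some bookkeeping is required to absorb every head configuration into the $q^{nmt^*}$ factor rather than letting the sum-rank budget leak between the two parts. The logarithmic budget $mnt^* = O(\log_q t)$ is precisely what forces this perturbation to remain polynomial in $t$, enabling the Vu--Wu estimates of \cite{VW} to be inherited by the sum-rank graph.
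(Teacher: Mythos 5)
Your proof follows the same route as the paper: decompose $\Gamma(\spacc{n_t}{m_t}{q})^k$ into head ($i\leq t^*$) and tail ($i>t^*$) factors, identify the tail with a Hamming space via Proposition~\ref{prop:all1_allm}, absorb the head into a polynomial-in-$t$ factor via $mnt^*=O(\log_q t)$ to get $D\sim D_H$ and $T\sim T_H$, import Lemma~\ref{lem:DH_TH_ineq}, and conclude with Theorems~\ref{thm:alpha_logbound}, \ref{thm:chi_logbound}, and Proposition~\ref{prop:Aq_alpha}. The only cosmetic difference is that you bound the head factor by $q^{nmt^*}$ where the paper uses $q^{\sum_{i\le t^*} n_i m_i}$, and you explicitly flag the $t-t^*$ vs.~$t$ comparison for the Vu--Wu lemma, which the paper glosses over; neither changes the argument.
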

\begin{proof}
    Let $D$ and $T$ denote the regularity and number of edges in a neighborhood of a vertex of $\Gamma(\spacc{n_t}{m_t}{q})^k$, respectively. Further, let $D_H$ denote the regularity of $H(t-t^*,q^{m'})^k$, and let $T_H$ be the number of edges in a neighborhood of $H(t-t^*,q^{m'})^k$.

    We know from Lemma \ref{lem:DH_TH_ineq} that there is a positive constant $\varepsilon$ for which $T_H \leq D_H^{2-\varepsilon}$ if $t$ is sufficiently large. Our strategy will thus be to show that $D \sim D_H$ and $T \sim T_H$, after which the result follows from Theorems \ref{thm:alpha_logbound} and \ref{thm:chi_logbound}.

    First we look at $D$. By regularity, we can find $D$ by counting the neighbors of the all $0$ element. For this we need to count the number of $\mathbf{X} \in \spacc{n_t}{m_t}{q}$ for which $\srk(\mathbf{X}) \leq k$. If we define $\mathbf{X}' = (X_{t^*+1},\hdots,X_t) \in \spacc{n_t'}{m_t'}{q}$, where $\mathbf{n_t'} = (1,\hdots,1), \mathbf{m_t'} = (m_{t^*+1},\hdots,m_t) \in \mathbb{N}^{t-t^*}$ then for given $X_1,\hdots,X_{t^*}$, $\mathbf{X}'$ needs to satisfy $\srk(\mathbf{X}') \leq k -\sum_{i=1}^{t^*} \rk(X_i)$. We will thus focus on bounding the amount of such $\mathbf{X}'$, and then multiplying this by the amount of choices for $X_1,\hdots,X_{t^*}$. By Proposition \ref{prop:all1_allm} the amount of $\mathbf{X}'$ with $\srk(\mathbf{X}') \leq k$, is equal to the amount of $\tilde{\mathbf{X}} \in \mathbb{F}_{q^{m'}}^{t-t^*}$ with $\wt_H(\tilde{\mathbf{X}}) \leq k$. Hence:
    \begin{align*}
        \abs{\left\{\mathbf{X}' \in \spacc{n_t'}{m_t'}{q} : \srk(\mathbf{X}') \leq k \right\}} = \abs{\left\{\tilde{\mathbf{X}} \in \mathbb{F}_{q^{m'}}^{t-t^*} : \wt_H(\tilde{\mathbf{X}}) \leq k \right\}} = D_H.
    \end{align*}    
    The above holds regardless of choice of $X_1,\hdots,X_{t^*}$. On the other hand, $\mathbb{F}_{q^{m'}}^{t-t^*}$ is clearly a smaller space than $\spacc{n_t}{m_t}{q}$. Thus, we can bound
    \begin{align*}
        D_H &\leq D\\
        &\leq \abs{\{(X_1,\hdots,X_{t^*}) \in \mathbb{F}_q^{(n_1,\hdots,n_{t^*}) \times (m_1,\hdots,m_{t^*})}\}} D_H\\
        &= q^{\sum_{i=1}^{t^*} n_im_i}D_H.
    \end{align*}

    Since $n_i \leq n, m_i \leq m$ and we assumed $mnt^* = O(\log_q t)$, we have that $q^{\sum_{i=1}^{t^*} n_im_i}$ is polynomial in $t$, and hence $D \sim D_H$.

    Next we look at $T$, which is exactly the number of pairs $(\mathbf{X},\mathbf{Y}) \in \left(\spacc{n_t}{m_t}{q}\right)^2$ for which:
    \begin{enumerate}
        \item $\srk(\mathbf{X}) \leq k$.
        \item $\srk(\mathbf{Y}) \leq k$.
        \item $\srk(\mathbf{X}-\mathbf{Y}) \leq k$.
    \end{enumerate}
    To show that  $T \sim T_H$, we will use an analogous strategy as we did for showing $D \sim D_H$. If we take $\mathbf{X}',\mathbf{Y}' \in \spacc{n_t'}{m_t'}{q}$, where $\mathbf{n_t'} = (1,\hdots,1), \mathbf{m_t'} = (m_{t^*+1},\hdots,m_t) \in \mathbb{N}^{t-t^*}$, then these must satisfy
    \begin{enumerate}
        \item $\srk(\mathbf{X}') \leq k-\sum_{i=1}^{t^*} \rk(X_i)$.
        \item $\srk(\mathbf{Y}') \leq k-\sum_{i=1}^{t^*} \rk(Y_i)$.
        \item $\srk(\mathbf{X}'-\mathbf{Y}') \leq k - \sum_{i=1}^{t^*} \rk(X_i-Y_i)$.
    \end{enumerate}
    We can now repeat the same trick by considering $\tilde{\mathbf{X}},\tilde{\mathbf{Y}} \in \mathbb{F}_{q^{m'}}^{t-t^*}$ to find that the amount of such pairs ($\tilde{\mathbf{X}},\tilde{\mathbf{Y}}$) is bounded by $T_H$, regardless of what we choose for the $X_1,\hdots,X_{t^*},Y_1,\hdots,Y_{t^*}$. Hence, we can bound $T$ as follows
    \begin{align*}
        T &\leq \abs{\{((X_1,\hdots,X_{t^*}),(Y_1,\hdots,Y_{t^*})) \in \left(\mathbb{F}_q^{(n_1,\hdots,n_{t^*}) \times (m_1,\hdots,m_{t^*})}\right)^2\}} T_H\\
        &= \abs{\{(X_1,\hdots,X_{t^*}) \in \mathbb{F}_q^{(n_1,\hdots,n_{t^*}) \times (m_1,\hdots,m_{t^*})}\}}^2 T_H \\
        &= q^{2 \sum_{i=1}^{t^*} n_im_i} T_H,
    \end{align*}
    and thus $T \sim T_H$, where again we used the assumption $mnt^* = O(\log_q t)$.

    By Lemma \ref{lem:DH_TH_ineq} there exists a positive $\varepsilon' > 0$ for which $T_H \leq D_H^{2-\varepsilon'}$, and subsequently we know there exists an $\varepsilon$ for which $T \leq D^{2-\varepsilon}$. Now, using Proposition \ref{prop:Aq_alpha} and Theorem \ref{thm:alpha_logbound}, we obtain
    \begin{align*}        
        A_q^{SRK}(\mathbf{n_t},\mathbf{m_t},k+1) &= \alpha\left(\Gamma\left(\spacc{n_t}{m_t}{q}\right)^k\right) \geq \frac{\abs{V}}{10D}\left( \log_2 D - \frac{1}{2}\log_2\left(\frac{\Delta}{\abs{V}}\right)\right) \\
        &= \frac{\abs{V}}{10D}\left( \log_2 \left(\frac{3D}{T^{\frac{1}{2}}}\right) \right) \geq \frac{\abs{V}}{10D}\left( \log_2 \left(\frac{D}{D^{\frac{1}{2}(2-\varepsilon)}}\right)\right)\\
        &=  \frac{\abs{V}}{10D}\left( \log_2 \left(\frac{1}{D^{-\frac{\varepsilon}{2}}}\right)\right)= \varepsilon\frac{\abs{V}}{20D}\log_2 D.      
    \end{align*}

    The stronger last statement about partitioning $\spacc{n_t}{m_t}{q}$ into sum-rank-metric codes whose average size is larger than this bound follows directly from Theorem \ref{thm:chi_logbound}, taking $f=D^\varepsilon$.
\end{proof}

\subsection{Asymptotic improvement in the size of the matrix}\label{sec:asymptoticsizematrix}

In this section we will consider the sum-rank metric with $\mathbf{n} = (n,n_2,\hdots,n_t)$ and $\mathbf{m} = (n,m_2,\hdots,m_t)$, where $n$ grows, and the $n_i,m_i$ and $t$ grow slow relative to $n$. The main result of this section is Theorem \ref{thm:matrix_size_gv}, which is an asymptotic strengthening of the GV bound from Theorem \ref{thm:GV_SRK}.

\begin{theorem}\label{thm:matrix_size_gv}
    Let $\mathbf{n} = (n,n_2,\hdots)$, $\mathbf{m} = (n,m_2,\hdots)$ be two infinite sequences of $\mathbb{N}$ valued functions of $n$, and let $t$ be another $\mathbb{N}$ valued function of $n$ such that:
    \begin{itemize}
        \item $m_\ell \geq n_\ell$ for all $\ell,n \in \mathbb{N}$.
        \item $\sum_{\ell=2}^t n_\ell m_\ell = O(\log_q n)$.
    \end{itemize}
    Define $\mathbf{n_n} = (n,n_2,\hdots,n_{t(n)}),\mathbf{m_n} = (n, m_2,\hdots,m_{t(n)})$. Further, suppose $k = \alpha n $ for some $0 < \alpha < \frac{2}{3}$. Then, for sufficiently large $n$, there is a positive constant $\varepsilon \leq 2$ depending on $q$ and $\alpha$ such that
    \begin{equation*}
        A_q^{SRK}(\mathbf{n_n},\mathbf{m_n},k+1) \geq \varepsilon\frac{\abs{V}}{20D}\log_2 D.
    \end{equation*}

\noindent    In fact, the whole space $\spacc{n_n}{m_n}{q}$ can be partitioned into sum-rank-metric codes of minimum distance $k$, such that the average size of these codes is the above bound.
\end{theorem}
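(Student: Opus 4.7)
The plan is to follow the architecture of the proof of Theorem~\ref{thm:srk_asymp_gv}. By Proposition~\ref{prop:Cayley}, the power graph $\Gamma(\spacc{n_n}{m_n}{q})^k$ is a vertex-transitive Cayley graph that is regular of some degree $D$; letting $T$ denote the number of edges in the neighborhood of the zero vertex and $\Delta$ the total number of triangles, vertex-transitivity gives $3\Delta/|V| = T$. It therefore suffices to exhibit a positive constant $\varepsilon = \varepsilon(q,\alpha) \leq 2$ such that $T \leq D^{2-\varepsilon}$ for all sufficiently large $n$. Once this is established, plugging into Proposition~\ref{prop:Aq_alpha} together with Theorem~\ref{thm:alpha_logbound} produces the stated lower bound on $A_q^{SRK}(\mathbf{n}_n,\mathbf{m}_n,k+1)$, while Theorem~\ref{thm:chi_logbound} applied with $f = D^\varepsilon$ yields the partition of $\spacc{n_n}{m_n}{q}$ into codes of matching average size, via exactly the chain of inequalities displayed at the end of the proof of Theorem~\ref{thm:srk_asymp_gv}.

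The first step is a reduction from the full sum-rank metric to the pure rank metric on the dominant $n\times n$ component. Let $D_R$ and $T_R$ denote the corresponding quantities for the pure rank-metric power graph $\Gamma(\mathbb{F}_q^{n\times n})^k$. The hypothesis $\sum_{\ell \geq 2} n_\ell m_\ell = O(\log_q n)$ guarantees that the side components $(X_2,\ldots,X_{t(n)})$ multiply the relevant counts by at most the polynomial-in-$n$ factor $q^{\sum_{\ell\geq 2}n_\ell m_\ell}$; moreover $\sum_{\ell \geq 2} \rk(X_\ell) = O(\log_q n)$, so the residual rank budget available for $X_1$ is perturbed by at most a logarithmic amount. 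Since the number of rank-at-most-$k'$ matrices in $\mathbb{F}_q^{n\times n}$ is $q^{k'(2n-k') + o(n^2)}$, a logarithmic shift in $k'$ alters this count by only a polynomial factor. Applying the same double-counting bookkeeping used to establish $D \sim D_H$ and $T \sim T_H$ in the proof of Theorem~\ref{thm:srk_asymp_gv}, one obtains $D \sim D_R$ and $T \sim T_R$. Because $D_R$ and $T_R$ are exponential in $n^2$, any exponential slack in $T_R \leq D_R^{2-\varepsilon'}$ survives this polynomial perturbation to yield an exponential slack in $T \leq D^{2-\varepsilon}$.

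The main obstacle, and the core content of the theorem, is the pure rank-metric estimate $T_R \leq D_R^{2 - \varepsilon'}$ for $k = \alpha n$ with $0 < \alpha < 2/3$. Standard rank-distribution asymptotics give $D_R = q^{\alpha(2-\alpha)n^2 + o(n^2)}$. I would count triples $(X, Y, X-Y)$ of matrices of rank at most $k$ by stratifying according to the ranks $(r_1, r_2)$ of $X$ and $Z := Y - X$ and parametrising each rank-$r$ matrix via its row and column spaces together with a nondegenerate bilinear pairing between them. The constraint $\rk(X + Z) \leq k$ is automatic when $r_1 + r_2 \leq k$, and in that regime a direct parameter count bounds the contribution to $T_R$ strictly below $D_R^2$ in exponent. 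For $r_1 + r_2 > k$ the same constraint becomes a rank-variety intersection condition that cuts the count by an exponential factor computable from the geometry of how the row and column spaces of $X$ and $Z$ may align. Summing over the rank profiles $(r_1, r_2)$ and optimising yields an absolute exponential gap between the exponents of $T_R$ and $D_R^2$; the threshold $\alpha < 2/3$ arises as the largest value of $\alpha$ for which this optimisation produces a uniform positive gap. Combining the resulting $T_R \leq D_R^{2 - \varepsilon'}$ with the equivalences $D \sim D_R$ and $T \sim T_R$ closes the argument.
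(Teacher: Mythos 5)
Your proposal follows essentially the same route as the paper: reduce to the $n\times n$ rank-metric component via the polynomial-factor bookkeeping enabled by $\sum_{\ell\ge 2} n_\ell m_\ell = O(\log_q n)$, establish $T\le D^{2-\varepsilon}$ by stratifying triangles according to rank profiles, use the row/column-space intersection geometry (in the paper this is Marsaglia's bound, Lemma~\ref{lem:col_row_rank}, together with Lemma~\ref{lem:intersect_spaces} giving the count $Q_q^n(i,j,c)$) to cut down the regime where the third rank constraint is nontrivial, split into ``$r_1+r_2$ small/large'' cases, and finish with Theorems~\ref{thm:alpha_logbound} and~\ref{thm:chi_logbound}. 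The only caveat is that the core content of the theorem lives precisely in the parameter count you have sketched but not carried out: one must actually compare the exponent $c(i+j-n-c)+j(2n-j)$ against $\alpha(2-\alpha)n^2$ and verify that the worst case $i=j=k$, $c=\lceil(i+j-k)/2\rceil$ still produces a strict gap, which is exactly where the constraint $\alpha<\tfrac{2}{3}$ enters.
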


\medskip

Our proof strategy for Theorem \ref{thm:matrix_size_gv} is once again to use Theorem \ref{thm:alpha_logbound} and Theorem \ref{thm:chi_logbound}. For this we will need $T \leq D^{2-\varepsilon}$ for some $\varepsilon>0$. We will be proving this in several steps, and we will make use of the following $3$ important quantities:
\begin{itemize}
    \item We define $M_q^n(k)$ to be the amount $n \times n$ matrices of rank $k$ over $\mathbb{F}_q$. See e.g. \cite[Section 1.7]{morrison_integer_2006} for the exact formula:
    \begin{equation*}
        M_q^n(k) = \prod_{\ell=0}^{k-1}\frac{ (q^{n}-q^\ell)^2}{(q^k-q^\ell)}.
    \end{equation*}
    \item We define $P_q^n(i,j,k)$ as the amount of pairs $X,Y \in \mathbb{F}_q^{n \times n}$ for which
    \begin{enumerate}
        \item $\rk(X) = i$.
        \item $\rk(Y) = j$.
        \item $\rk(X-Y) \leq k$.
    \end{enumerate}
    \item We define $Q_q^n(i,j,c)$ as the amount of matrices $Y \in \mathbb{F}_q^{n \times n}$ of rank $j$, such that for a fixed $X \in \mathbb{F}_q^{n \times n}$ of rank $i$, the dimension of $\mathrm{col}(X) \cap \mathrm{col}(Y)$ is exactly $c$. It follows that by summing $Q_q^n(i,j,c)$ over all $c$, we recover $M_q^n(j)$:
    \begin{equation*}
        M_q^n(j) = \sum_{c=0}^j Q_q^n(i,j,c).
    \end{equation*}
\end{itemize}

We now outline the general proof idea of Theorem \ref{thm:matrix_size_gv}
\begin{description}
    \item[Step 1] We bound $D(\Gamma(\spac)^k)$ from below by $M_q^n(k)$.
    \item[Step 2] We bound $T(\Gamma(\spac)^k)$ from above in terms of $P_q^n(i,j,k)$ (Lemma \ref{lem:TR_Rprime_ineq}).
    \item[Step 3] In Corollary \ref{cor:P_upper}, we obtain a bound on $P_q^n(i,j,k)$
    \begin{equation*}
        P_q^n(i,j,k) \leq M_q^n(i) \cdot 2\sum_{c=\lceil\frac{i+j-k}{2}\rceil}^jQ_q^n(i,j,c),
    \end{equation*}
    using the existing Lemma \ref{lem:col_row_rank}.
    \item[Step 4] We derive a closed expression for $Q_q^n(i,j,c)$ (Corollary \ref{cor:Q_expression}) using Lemma \ref{lem:intersect_spaces}.
    \item[Step 5] We show that for $i+j > k$ we have $Q_q^n(i,j,c) \leq (M_q^n(j))^\delta$ for a positive $\delta<1$ depending only on $\alpha$ and $q$ (Lemma \ref{lem:Q_eps}).
    \item[Step 6] We show that $P_q^n(i,j,k) < (M_q^n(k))^{2-\varepsilon}$, and subsequently that $T(\Gamma(\spac)^k)$ must be less than $M_q^n(k)^{2-\varepsilon}$ for some positive $\varepsilon$.
\end{description}

We start with step 1. The inequality $D(\Gamma(\spac)^k) \geq M_q^n(k)$ follows directly from the fact that for an $X \in \mathbb{F}_q^{n \times n}$, the matrix tuple $(X,O,\hdots,O) \in \spac$ is at distance $k$ from the all $0$ element. Thus each such $X$ defines a unique neighbor in $\spac$, and hence the degree must at least be $M_q^n(k)$.

Let us now proceed with the second step: bounding $T(\Gamma(\spac)^k)$ in terms of $P_q^n(i,j,k)$. 

\begin{lemma}\label{lem:TR_Rprime_ineq} 
    Let $\mathbf{n} = (n,n_2,\hdots,n_t)$, $\mathbf{m} = (n,m_2,\hdots,m_t)$. Then   \begin{equation*}\label{eq:TR_Rprime_ineq}
    T(\Gamma(\spac)^k) \leq 2q^{2\sum_{\ell=2}^t n_\ell m_\ell} \sum_{i=1}^k\sum_{j=1}^i P_q^n(i,j,k).
    \end{equation*}
\end{lemma}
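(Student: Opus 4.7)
The plan is to exploit the vertex-transitivity of $\Gamma(\spac)^k$ (Proposition \ref{prop:Cayley}) so that $T$ equals the number of edges in the neighborhood of the zero vertex $\mathbf{0}$. Equivalently, $2T$ equals the number of ordered pairs $(\mathbf{X},\mathbf{Y})$ of distinct vertices satisfying
\[
\srk(\mathbf{X}),\ \srk(\mathbf{Y}),\ \srk(\mathbf{X}-\mathbf{Y}) \in \{1,\ldots,k\}.
\]

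The key decomposition is to stratify this count according to the ranks $i=\rk(X_1)$ and $j=\rk(Y_1)$ of the distinguished $n\times n$ component. Since $\srk$ is the sum of the component ranks, the conditions $\srk(\mathbf{X})\le k$, $\srk(\mathbf{Y})\le k$, $\srk(\mathbf{X}-\mathbf{Y})\le k$ immediately imply $i\le k$, $j\le k$, and $\rk(X_1-Y_1)\le k$. By the definition of $P_q^n(i,j,k)$, the number of valid pairs $(X_1,Y_1)$ with prescribed ranks $i,j$ is exactly $P_q^n(i,j,k)$. For the remaining components $\ell\ge 2$, I simply use the trivial bound $|\mathbb{F}_q^{n_\ell\times m_\ell}|=q^{n_\ell m_\ell}$ on the number of choices for each of $X_\ell$ and $Y_\ell$, which contributes at most $q^{2\sum_{\ell=2}^t n_\ell m_\ell}$ to the count. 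This yields
\[
2T \;\le\; q^{2\sum_{\ell=2}^t n_\ell m_\ell}\sum_{i=0}^k\sum_{j=0}^k P_q^n(i,j,k).
\]

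To pass to the stated form of the bound, I invoke the symmetry $P_q^n(i,j,k)=P_q^n(j,i,k)$, which is immediate from the definition by swapping $X$ and $Y$. This lets me restrict the double sum to the range $0\le j\le i\le k$ at the cost of a factor of $2$. The boundary terms, corresponding to $i=0$ or $j=0$, can be absorbed into the main sum via the trivial inequality $P_q^n(i,i,k)\ge M_q^n(i)$ (since the diagonal pairs $(X,X)$ always satisfy $\rk(X-X)=0\le k$), so that $\sum_{i=1}^k M_q^n(i)\le \sum_{i=1}^k P_q^n(i,i,k) \le \sum_{1\le j\le i\le k} P_q^n(i,j,k)$. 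Combining these steps yields the desired inequality, after some minor rebookkeeping of the constant.

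The only real hurdle is the bookkeeping in this last absorption step: one must check that the boundary terms and the diagonal double-count are genuinely dominated by the right-hand side, and keep track of the factor of $2$ coming from the passage between ordered and unordered pairs together with the factor coming from the symmetrization of the summation range. No deep ingredient is needed beyond vertex-transitivity, the subadditivity of $\srk$ across components, and the definition of $P_q^n$; the precise constant is immaterial for the asymptotic application in Step~6, as any fixed multiplicative constant is absorbed into the parameter $\varepsilon$ eventually obtained from $T\le D^{2-\varepsilon}$.
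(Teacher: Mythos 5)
Your argument follows essentially the same approach as the paper's: decompose the triangle count by the rank pair $(i,j)=(\rk(X_1),\rk(Y_1))$ of the distinguished $n\times n$ components, bound each such slice by $P_q^n(i,j,k)$, use the crude bound $q^{2\sum_{\ell\ge 2}n_\ell m_\ell}$ on the remaining components, and then symmetrize using $P_q^n(i,j,k)=P_q^n(j,i,k)$. The only place where you genuinely diverge is your more scrupulous bookkeeping — tracking ordered versus unordered pairs and the $i=0$, $j=0$ boundary terms that the paper silently drops when it writes $T(\Gamma(\mathbb{F}_q^{n\times n})^k)=\sum_{i=1}^k\sum_{j=1}^k P_q^n(i,j,k)$. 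That extra care is correct, but it is also why you do not land on the stated constant: running your numbers, the $(0,0)$ pair contributes $1$, the $(i,0)$ and $(0,j)$ rays contribute $2\sum_{i\ge 1}M_q^n(i)\le 2\sum_{1\le j\le i\le k}P_q^n(i,j,k)$, and the interior contributes $2\sum_{1\le j\le i\le k}P_q^n(i,j,k)$; dividing by two and absorbing the $+1$ yields $T\le 3\,q^{2\sum n_\ell m_\ell}\sum_{i=1}^k\sum_{j=1}^i P_q^n(i,j,k)$ rather than the factor $2$ in the lemma. You flag this yourself, and you are right that the constant is immaterial for the eventual $T\le D^{2-\varepsilon}$ estimate, but as a matter of record your proof establishes the lemma only up to a harmless multiplicative constant, whereas the paper obtains the cleaner $2$ precisely because it ignores those boundary terms (a small looseness in the paper's own write-up, not a flaw in yours).
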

\begin{proof}
First, note that $T(\Gamma(\spac)^k)$ is equal to the amount of pairs $\mathbf{X},\mathbf{Y} \in \spac$ for which
\begin{enumerate}
    \item $\srk(\mathbf{X}) \leq k$.
    \item $\srk(\mathbf{Y}) \leq k$.
    \item $\srk(\mathbf{X}-\mathbf{Y}) \leq k$.
\end{enumerate}

Denote $\mathbf{n}' = (n_2,\hdots,n_t)$, $\mathbf{m}' = (m_2,\hdots,n_t)$ and split $\mathbf{X}$ and $\mathbf{Y}$ into $\mathbf{X} = (X,\mathbf{X}')$ and $\mathbf{Y} = (Y,\mathbf{Y}')$, where $X,Y \in \mathbb{F}_q^{n \times n}$ and  $\mathbf{X}',\mathbf{Y}' \in \spacc{n'}{m'}{q}$. Clearly
\begin{equation*}
    T(\Gamma(\spac)^k) \leq T(\Gamma(\mathbb{F}_q^{n \times n})^k) \cdot T(\Gamma(\spacc{n'}{m'}{q})^k). 
\end{equation*}
Now consider $T(\Gamma(\spacc{n'}{m'}{q})^k)$, this is of course less than the total amount of pairs $(\mathbf{X}',\mathbf{Y}') \in \spacc{n'}{m'}{q} \times \spacc{n'}{m'}{q}$. Hence, we get the upper bound
\begin{equation*}
    T(\Gamma(\spacc{n'}{m'}{q})^k) \leq q^{2\sum_{\ell=2}^t n_\ell m_\ell}.
\end{equation*}

We now switch our focus to $T(\Gamma(\mathbb{F}_q^{n \times n})^k)$. We write:
\begin{equation*}
    T(\Gamma(\mathbb{F}_q^{n \times n})^k) = \sum_{i=1}^k\sum_{j=1}^k P_q^n(i,j,k).
\end{equation*}

Note that $P_q^n(i,j,k) = P_q^n(j,i,k)$, thus
\begin{equation*}
    T(\Gamma(\mathbb{F}_q^{n \times n})^k) \leq 2\sum_{i=1}^k\sum_{j=1}^i P_q^n(i,j,k). 
\end{equation*}
In conclusion,
\begin{align*}
    T(\Gamma(\spac)^k) &\leq T(\Gamma(\mathbb{F}_q^{n \times n})^k) \cdot T(\Gamma(\spacc{n'}{m'}{q})^k) \\
    &\leq 2 \sum_{i=1}^k\sum_{j=1}^i P_q^n(i,j,k)\cdot q^{2\sum_{\ell=2}^t n_\ell m_\ell}.\qedhere
\end{align*}
\end{proof}
There are two major benefits of using the bound from Lemma \ref{lem:TR_Rprime_ineq}. The first, is that the factor in front of the sum is polynomial in $n$, so it grows much slower than $M_q^n(k)$, and thus asymptotically in $n$ we do not have to worry about this multiplicative term. The second benefit, is that we can assume $i \geq j$ as we bound $P_q^n(i,j,k)$ in the sequel.

Next we will be bounding $P_q^n(i,j,k)$ (step 3). To do this, we make use of Lemma \ref{lem:col_row_rank}, which will allows us to bound the rank of the difference of two matrices in terms of the dimension of the intersection of their row and column spaces.

\begin{lemma}\cite[Section 4, Theorem 1]{marsaglia_bounds_1964}\label{lem:col_row_rank}
    Let $X$ and $Y$ be $n\times n$ matrices with rank $x$ and $y$ respectively. Let the dimension of $\mathrm{col}(X) \cap \mathrm{col}(Y)$ be $c$ and the dimension of $\mathrm{row}(X) \cap \mathrm{row}(Y)$ be $r$. Then the rank of $X-Y$ is at least $x+y-c-r$.
\end{lemma}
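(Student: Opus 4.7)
My plan is to bound $\rk(X-Y)$ from below by writing $X-Y$ as a product of two matrices whose inner dimension is exactly $x+y$, while the outer ranks fall short of $x+y$ by exactly $c$ and $r$, respectively. Sylvester's rank inequality then closes the argument in a single line.

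Concretely, I would start from full-rank factorizations $X = P_X Q_X^T$ with $P_X, Q_X \in \mathbb{F}_q^{n \times x}$ each of rank $x$, and $Y = P_Y Q_Y^T$ with $P_Y, Q_Y \in \mathbb{F}_q^{n \times y}$ each of rank $y$. Such factorizations exist because choosing a basis of $\mathrm{col}(X)$ for the columns of $P_X$ determines $Q_X^T$ uniquely, and the induced rows of $Q_X^T$ then span $\mathrm{row}(X)$ (by a dimension count); likewise for $Y$. With these factorizations in hand one can write
\begin{equation*}
X - Y \;=\; \begin{pmatrix} P_X & -P_Y \end{pmatrix} \begin{pmatrix} Q_X^T \\ Q_Y^T \end{pmatrix} \;=:\; A \cdot B,
\end{equation*}
where $A$ has size $n \times (x+y)$ and $B$ has size $(x+y) \times n$. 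The column space of $A$ is $\mathrm{col}(X) + \mathrm{col}(Y)$, so the standard dimension formula gives $\rk(A) = x+y-c$; symmetrically, the row space of $B$ is $\mathrm{row}(X) + \mathrm{row}(Y)$, whence $\rk(B) = x+y-r$.

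Applying Sylvester's rank inequality $\rk(AB) \geq \rk(A) + \rk(B) - (x+y)$ to the factorization above yields
\begin{equation*}
\rk(X-Y) \;\geq\; (x+y-c) + (x+y-r) - (x+y) \;=\; x+y-c-r,
\end{equation*}
which is the claimed bound. I do not anticipate a real technical obstacle: the only conceptually nontrivial step is recognizing that the ``right'' factorization of $X-Y$ pins the inner dimension to exactly $x+y$, so that the two contractions coming from the intersections $c$ and $r$ appear additively after Sylvester. Both rank computations for $A$ and $B$ reduce to a single application of $\dim(U+V) = \dim U + \dim V - \dim(U \cap V)$, and nothing else is required.
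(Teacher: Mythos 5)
Your proof is correct. Note, however, that the paper does not supply its own proof of this lemma: it simply cites Marsaglia's 1964 technical report, so there is no in-paper argument to compare against. Your argument stands on its own and is a clean, standard route to the result. The full-rank factorizations $X = P_X Q_X^T$ and $Y = P_Y Q_Y^T$ do exist as you say, and the two facts you invoke silently are both valid: since $Q_X^T$ has full row rank, $\mathrm{col}(P_X Q_X^T) = \mathrm{col}(P_X)$; and since $P_X$ has full column rank (hence has a left inverse), $\mathrm{row}(P_X Q_X^T) = \mathrm{row}(Q_X^T)$. These give $\rk(A) = \dim(\mathrm{col}(X) + \mathrm{col}(Y)) = x + y - c$ and $\rk(B) = \dim(\mathrm{row}(X) + \mathrm{row}(Y)) = x + y - r$, and Sylvester's inequality with inner dimension $x+y$ immediately produces $\rk(X-Y) \geq x + y - c - r$. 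The one presentational point worth tightening is that $A$ and $B$ need not individually have full rank $x+y$ (indeed, by construction they generally do not), which is exactly what makes Sylvester's bound informative here; your write-up already makes this clear implicitly via the dimension formula, but stating it explicitly would remove any ambiguity for a reader.
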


We can now use Lemma \ref{lem:col_row_rank} to obtain an upper bound on $P_q^n(i,j,k)$ in terms of $M_q^n(i)$ and $Q_q^n(i,j,c)$:
\begin{corollary}\label{cor:P_upper}
    If $i \geq j$ and $i+j \geq k$ then
    \begin{equation*}
        P_q^n(i,j,k) \leq M_q^n(i) \cdot 2\sum_{c=\lceil\frac{i+j-k}{2}\rceil}^jQ_q^n(i,j,c).
    \end{equation*}
\end{corollary}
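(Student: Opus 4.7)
The plan is to factor the count $P_q^n(i,j,k)$ by first fixing the matrix $X$ of rank $i$, which contributes the leading $M_q^n(i)$, and then controlling the number of rank-$j$ matrices $Y$ with $\rk(X-Y)\le k$ using the intersection-dimension bound from Lemma~\ref{lem:col_row_rank}.

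Concretely, fix any $X\in\mathbb{F}_q^{n\times n}$ with $\rk(X)=i$. For a candidate $Y$ of rank $j$, set $c:=\dim(\mathrm{col}(X)\cap \mathrm{col}(Y))$ and $r:=\dim(\mathrm{row}(X)\cap \mathrm{row}(Y))$. Lemma~\ref{lem:col_row_rank} gives $\rk(X-Y)\ge i+j-c-r$, so the constraint $\rk(X-Y)\le k$ forces $c+r\ge i+j-k$. Because $i+j\ge k$, this lower bound is non-negative; and whenever two non-negative integers sum to at least $i+j-k$, at least one of them is at least $t:=\lceil (i+j-k)/2\rceil$. Hence
\[
\{Y:\rk(Y)=j,\ \rk(X-Y)\le k\}\ \subseteq\ \{Y:c\ge t\}\cup\{Y:r\ge t\},
\]
and a union bound reduces the task to counting each of the two sets on the right.

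Next I would show that both of these sets have size $\sum_{c'=t}^{j}Q_q^n(i,j,c')$. The first equality is immediate from the definition of $Q_q^n(i,j,c')$, with $c'\le j$ because $c'\le\min(i,j)=j$ under the hypothesis $i\ge j$. The second equality follows by a transposition symmetry: the map $Y\mapsto Y^T$ is a rank-preserving bijection on $\mathbb{F}_q^{n\times n}$ that turns $\mathrm{row}(Y)\cap \mathrm{row}(X)$ into $\mathrm{col}(Y^T)\cap \mathrm{col}(X^T)$, so it puts the "row-intersection" count for $X$ into bijection with the "column-intersection" count for $X^T$. Since $\rk(X^T)=i$, and the quantity $Q_q^n(i,j,c')$ depends only on the rank of the fixed matrix (by $\mathrm{GL}_n$-transitivity on $i$-dimensional subspaces of $\mathbb{F}_q^n$), both counts coincide. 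Multiplying by $M_q^n(i)$ then yields the stated inequality.

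The only delicate point is the well-definedness of $Q_q^n(i,j,c')$ under change of the fixed rank-$i$ matrix, which is what makes the transpose symmetry applicable; this is essentially a one-line orbit argument but deserves to be stated explicitly. The hypotheses $i\ge j$ and $i+j\ge k$ are used precisely to ensure that the summation range $t\le c'\le j$ is non-empty and within the admissible interval for intersection dimensions; beyond that, the proof is a clean union bound and should not present further obstacles.
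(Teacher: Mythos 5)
Your proof is correct and follows essentially the same route as the paper's: fix a rank-$i$ matrix $X$ (contributing $M_q^n(i)$), apply Marsaglia's bound to force $c+r\ge i+j-k$, observe that at least one of $c,r$ must be at least $\lceil(i+j-k)/2\rceil$, and union-bound the two resulting cases. The only difference is that you make explicit the transposition symmetry and the well-definedness of $Q_q^n(i,j,c)$ where the paper just says the row case is ``analogous''; these are correct clarifications, not a different argument.
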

\begin{proof}
    Pick an arbitrary matrix $X \in \mathbb{F}_q^{n \times n}$ of rank $i$, there are $M_q^n(i)$ such choices. Now, we will count the amount of matrices $Y \in \mathbb{F}_q^{n \times n}$ such that $Y$ is of rank $j$ and $\rk(X-Y) \leq k$. By Lemma \ref{lem:col_row_rank}, we know that this means either the dimension of $\mathrm{col}(X) \cap \mathrm{col}(Y)$ or the dimension of $\mathrm{row}(X) \cap \mathrm{row}(Y)$ is at least $\frac{i+j-k}{2}$. Let us assume we are in the former case, the latter works analogously.
    
    Denote by $c$ the dimension of $\mathrm{col}(X) \cap \mathrm{col}(Y)$. Then, the total amount of matrices $Y \in \mathbb{F}_q^{n \times n}$ which satisfy $\rk(Y) = j$ and $\rk(X-Y) \leq k$ is
    \begin{equation*}
        \sum_{c=\lceil\frac{i+j-k}{2}\rceil}^j Q_q^n(i,j,c) .
    \end{equation*}
    Using an analogous argument as above, we find this same amount of choices when we look at the case where $\mathrm{dim}(\mathrm{row}(X) \cap \mathrm{row}(Y)) \geq \frac{i+j-k}{2}$. Thus
    \begin{equation*}
        P_q^n(i,j,k) \leq M_q^n(i) \cdot 2 \sum_{c=\lceil\frac{i+j-k}{2}\rceil}^j Q_q^n(i,j,c), 
    \end{equation*}
    and the result follows.
\end{proof}

 We now derive an expression for $Q_q^n(i,j,c)$ (step 4) using the following lemma.
\begin{lemma}\cite[Lemma 2.1]{wang_association_2010}\label{lem:intersect_spaces}
    Let $U \subset \mathbb{F}_q^n$ be a $i$-dimensional subspace. Then the amount of $j$-dimensional subspaces $V \subset\mathbb{F}_q^n$ for which $\dim(U \cap V) =c$ is equal to
    \begin{equation*}
        q^{(i-c)(j-c)}\binom{i}{c}_q\binom{n-i}{j-c}_q.
    \end{equation*}
\end{lemma}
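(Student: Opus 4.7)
The plan is to count the desired $j$-dimensional subspaces $V$ by partitioning them according to the intersection $W = U \cap V$ and then according to the image of $V$ in the quotient $\mathbb{F}_q^n/U$. First I would observe that by the second isomorphism theorem, $\bar V := (V+U)/U$ has dimension $j-c$ whenever $\dim(U \cap V)=c$. So specifying $V$ factors into (i) choosing the intersection $W \subset U$, a $c$-dimensional subspace, (ii) choosing the image $\bar V \subset \mathbb{F}_q^n/U$, a $(j-c)$-dimensional subspace, and (iii) a ``gluing'' that assembles these into $V$. The counts for (i) and (ii) are immediate from the standard enumeration of subspaces, giving $\binom{i}{c}_q$ and $\binom{n-i}{j-c}_q$ respectively, so the real content is showing that the number of legitimate gluings for each fixed pair $(W,\bar V)$ is exactly $q^{(i-c)(j-c)}$.

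For the gluing step I would fix a complement $X$ of $U$ in $\mathbb{F}_q^n$ so that $\bar V$ corresponds to an explicit subspace $V' \subset X$, and note that any $V$ with $V \cap U = W$ and image $\bar V$ must satisfy $V \subset U \oplus V'$. Choosing a basis $v_1',\ldots,v_{j-c}'$ of $V'$, every such $V$ arises as the span of $W$ together with lifts $v_l' + u_l$ for some tuple $(u_1,\ldots,u_{j-c}) \in U^{j-c}$. Two checks finish the count: that any such tuple produces a valid $V$ (with $V \cap U = W$ coming for free from $V' \cap U = 0$), and that two tuples yield the same subspace if and only if they differ coordinatewise by elements of $W$. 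This gives $|U|^{j-c}/|W|^{j-c} = q^{(i-c)(j-c)}$ gluings, and multiplying the three factors proves the formula.

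The main obstacle is the second of the two checks above, namely pinning down the fibers of the map $U^{j-c} \to \{V\}$. One direction is obvious (translating each $u_l$ by an element of $W$ does not change the span), but for the converse one must argue that if $(u_l)$ and $(u_l')$ generate the same $V$, then their differences lie in $W$; this uses that the $v_l' + u_l$ descend to a basis of $V/W$, so the $W$-cosets of the lifts are uniquely determined by $V$. Once this equivalence is in hand, the multiplication is routine and the result follows.
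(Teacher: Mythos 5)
Your proof is correct. A small caveat first: the paper does not prove this lemma but simply cites it from Wang, Guo, and Li (\cite[Lemma 2.1]{wang_association_2010}), so there is no in-paper argument to compare against; I can only assess your argument on its own merits.

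Your decomposition is the standard one and it is carried out correctly. The factor $\binom{i}{c}_q$ counts choices of $W = U \cap V$ inside $U$; the factor $\binom{n-i}{j-c}_q$ counts choices of the image $\bar V = (V+U)/U$ in the $(n-i)$-dimensional quotient; and the gluing count is right. To be pedantic about the two checks you flag: once a complement $X$ of $U$ and a basis $v_1',\ldots,v_{j-c}'$ of $V' \subset X$ are fixed, any tuple $(u_1,\ldots,u_{j-c}) \in U^{j-c}$ produces $V = W + \mathrm{span}\{v_l' + u_l\}$ with the right intersection and image (project mod $U$ to see that a relation among the $v_l' + u_l$ over $W$ forces all coefficients to vanish, since $V' \cap U = 0$ and the $v_l'$ are independent mod $U$), and two tuples give the same $V$ iff $u_l - u_l'' \in W$ for all $l$, which follows from $V \cap U = W$ exactly as you argue. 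This gives fibers of size $|W|^{j-c} = q^{c(j-c)}$ out of $|U|^{j-c} = q^{i(j-c)}$ tuples, i.e.\ $q^{(i-c)(j-c)}$ subspaces per choice of $(W,\bar V)$, and multiplying the three factors yields the claim. The argument is clean, complete, and requires no results beyond the $q$-binomial count of subspaces.
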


\begin{corollary}\label{cor:Q_expression}
    If $c \leq j$ 
    \begin{equation*}
        Q_q^n(i,j,c) = q^{(i-c)(j-c)}\binom{i}{c}_q\binom{n-i}{j-c}_q \prod_{\ell=0}^{j-1}(q^n-q^\ell).
    \end{equation*}
\end{corollary}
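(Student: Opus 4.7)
The plan is to factor the count $Q_q^n(i,j,c)$ into two independent pieces: (i) the number of admissible candidate column spaces for $Y$, and (ii) the number of $n\times n$ matrices of rank $j$ with a prescribed column space. Because $Q_q^n(i,j,c)$ only depends on $X$ through $\mathrm{col}(X)$, both factors depend on $X$ solely through the dimension $i$, which is exactly what makes this factorization legitimate.

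First I would apply Lemma \ref{lem:intersect_spaces} directly with $U = \mathrm{col}(X)$, a fixed $i$-dimensional subspace of $\mathbb{F}_q^n$, to count the number of $j$-dimensional subspaces $V \subset \mathbb{F}_q^n$ with $\dim(U \cap V) = c$. This immediately yields
\begin{equation*}
    q^{(i-c)(j-c)}\binom{i}{c}_q\binom{n-i}{j-c}_q
\end{equation*}
admissible column spaces for $Y$. The hypothesis $c \leq j$ is what makes the $q$-binomial $\binom{i}{c}_q$ meaningful, and the formula automatically returns $0$ in the degenerate regimes where no such $V$ exists.

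Next I would fix such a subspace $V$ and count the $n \times n$ matrices $Y$ of rank $j$ with $\mathrm{col}(Y) = V$. Viewing $Y$ as a linear map $\mathbb{F}_q^n \to \mathbb{F}_q^n$, this is equivalent to counting surjective linear maps $\mathbb{F}_q^n \twoheadrightarrow V$. After choosing any basis of $V$, these are in bijection with $j \times n$ matrices over $\mathbb{F}_q$ of rank $j$, i.e., ordered $j$-tuples of linearly independent vectors in $\mathbb{F}_q^n$. Building such a tuple one vector at a time, the $\ell$-th vector must avoid the span of the previous $\ell$, giving exactly $\prod_{\ell=0}^{j-1}(q^n - q^\ell)$ choices --- a standard count. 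Crucially, this number is independent of both the basis chosen and the particular subspace $V$.

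Multiplying the two counts yields the claimed closed form. I do not anticipate a real obstacle: the argument is essentially a bookkeeping exercise, and the only mildly subtle point is confirming that the count of matrices with prescribed column space is basis-independent, which is immediate because changing a basis of $V$ corresponds to right-multiplication by an invertible $j \times j$ matrix and hence simply permutes the set of rank-$j$ representatives.
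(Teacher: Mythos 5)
Your proposal is correct and follows essentially the same two-step decomposition the paper uses: first counting admissible column spaces via Lemma \ref{lem:intersect_spaces}, then multiplying by the number of rank-$j$ matrices with a prescribed column space, namely $\prod_{\ell=0}^{j-1}(q^n-q^\ell)$. The only difference is cosmetic --- you spell out the second count via surjective linear maps and basis-independence, where the paper simply asserts it.
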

\begin{proof}
    Fix an $X \in \mathbb{F}_q^{n \times n}$ of rank $i$. Given a value of $c$ we want to count all matrices $Y \in \mathbb{F}_q^{n \times n}$ for which $\rk(Y) = j$ and the dimension of $\mathrm{col}(X) \cap \mathrm{col}(Y)$ is $c$. To do this, we first count all possible column spaces $Y$ could have. By Lemma \ref{lem:intersect_spaces}, we find that for a fixed $c$ there are
    \begin{equation*}
        q^{(i-c)(j-c)}\binom{i}{c}_q\binom{n-i}{j-c}_q
    \end{equation*}
    different choices of $\mathrm{col}(Y)$. Each column space $U$ has $\prod_{\ell=0}^{j-1}(q^n-q^\ell)$ different matrices $Y$ which have $U$ as column space. Hence,
    \begin{equation*}
        Q_q^n(i,j,c) = q^{(i-c)(j-c)}\binom{i}{c}_q\binom{n-i}{j-c}_q \prod_{\ell=0}^{j-1}(q^n-q^\ell). \qedhere
    \end{equation*}
\end{proof}

We now split into two cases depending on whether $i+j \geq k$ or $i+j < k$. In the former case, we need the following lemma.

\begin{lemma}\label{lem:Q_eps}
    Let $k=\alpha n$ for some fixed $0<\alpha < \frac{2}{3}$. Let $j \leq i \leq k$ and $i+j > k$. Then there exists a $0<\delta<1$ depending only on $q$ and $\alpha$, such that, for large enough $n$, 
    \begin{equation*}
        \sum_{c=\lceil\frac{i+j-k}{2}\rceil}^jQ_q^n(i,j,c) \leq (M_q^n(j))^\delta.
    \end{equation*}
\end{lemma}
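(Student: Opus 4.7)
The plan is to estimate each summand $Q_q^n(i,j,c)$ asymptotically and reduce the sum to its largest term. Applying the standard estimate $\binom{a}{b}_q = q^{b(a-b)} \cdot \Theta_q(1)$ (with implicit constants depending only on $q$) to the formula in Corollary~\ref{cor:Q_expression}, and noting that $\prod_{\ell=0}^{j-1}(q^n - q^\ell) = q^{jn}\,\Theta_q(1)$, I would compute
\[
\log_q Q_q^n(i,j,c) = 2jn - j^2 + c(i+j-n-c) + O_q(1),
\]
and the analogous $\log_q M_q^n(j) = 2jn - j^2 + O_q(1)$. Subtracting gives
\[
\log_q \frac{Q_q^n(i,j,c)}{M_q^n(j)} = -c(n+c-i-j) + O_q(1).
\]

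Next I would check that on the summation range $[c^*, j]$, the summands are maximized at $c = c^*$. The quadratic $c \mapsto c(n+c-i-j)$ has its vertex at $c = (i+j-n)/2$, and the hypothesis $\alpha < 2/3$ combined with $i+j \leq 2k$ yields $c^* \geq (i+j-k)/2 > (i+j-n)/2$. Thus the quadratic is strictly increasing on $[c^*, j]$, so $Q_q^n(i,j,c)$ is decreasing there. Computing the ratio of consecutive terms shows $Q_q^n(i,j,c+1)/Q_q^n(i,j,c) \leq q^{k-n-1}$, which is exponentially small, so the sum is bounded by an absolute constant (depending only on $q$) times the first term:
\[
\log_q \sum_{c=c^*}^{j} Q_q^n(i,j,c) \;\leq\; \log_q M_q^n(j) - c^*(n+c^*-i-j) + O_q(1).
\]

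The crux is then to show $c^*(n+c^*-i-j)$ is a fixed positive fraction of $\log_q M_q^n(j) = j(2n-j) + O(1)$. Using $c^* \geq (i+j-k)/2$ one obtains
\[
c^*(n+c^*-i-j) \;\geq\; \frac{(i+j-k)(2n-i-j-k)}{4},
\]
and the hypothesis $\alpha < 2/3$ with $i+j \leq 2k$ forces $2n-i-j-k \geq (2-3\alpha) n > 0$. The main obstacle is that the trivial bound $i+j-k \geq 1$ yields only $\Theta(n)$ decay, which is weak when $j$ is a linear fraction of $n$. I expect to handle this by case analysis on the size of $j$: when $j > k/2$, the constraint $i \geq j$ upgrades $i+j-k \geq 2j-k$, so the decay becomes of order $jn$, matching $\log_q M_q^n(j)$ up to a constant determined by $\alpha$; when $j \leq k/2$, the quantity $\log_q M_q^n(j)$ is correspondingly smaller, so a linear-in-$n$ decay already suffices after a careful choice of $\delta$. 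Taking the worse constant from the two regimes yields a $\delta \in (0,1)$ depending only on $q$ and $\alpha$, which completes the proof for $n$ large enough.
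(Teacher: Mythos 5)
Your asymptotic setup matches the paper's: both reduce $Q_q^n(i,j,c)$ via Corollary~\ref{cor:Q_expression} and the estimate $\binom{a}{b}_q = \Theta_q(q^{b(a-b)})$ to $\log_q Q_q^n(i,j,c) = j(2n-j) + c(i+j-n-c) + O_q(1)$, identify the vertex of the quadratic in $c$, and note that the first term $c=\lceil(i+j-k)/2\rceil$ dominates. Your observation that the sum is geometric with ratio $\Theta_q(q^{i+j-n-2c-1}) \le \Theta_q(q^{k-n-1})$ is correct and in fact tidier than what the paper writes (the paper simply multiplies the maximal term by the range length). You have also put your finger on the genuine difficulty: when $i+j-k = O(1)$ the first term loses only an additive $\Theta(n)$ in the exponent, which is negligible against $\log_q M_q^n(j) = j(2n-j) = \Theta(n^2)$ when $j=\Theta(n)$.

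However, the case split you propose does not close this gap. Take $j = k/2$, $i = k/2+1$ (so $i+j=k+1$ and $c^*=1$); this sits on the boundary of both of your regimes, and in each your argument fails: with $j\le k/2$ you get only $c^*(n+c^*-i-j) = n-k = \Theta(n)$ against $\log_q M_q^n(j)=\Theta(n^2)$, and with $j$ just above $k/2$ the ``upgrade'' $i+j-k\ge 2j-k$ gives $2j-k=O(1)$, so the decay is still only $\Theta(n)$. In fact, for this choice one computes $\log_q\bigl(Q_q^n(i,j,1)/M_q^n(j)\bigr) = -(n-k) + O_q(1)$, so $\sum_{c\ge 1}Q_q^n(i,j,c) = M_q^n(j)^{\,1-O(1/n)}$, and no fixed $\delta<1$ can work for large $n$. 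So the lemma as literally stated, with $M_q^n(j)$ on the right, appears to be unprovable in the regime $i+j-k=O(1)$, $j=\Theta(n)$.

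The paper's own proof avoids this by (implicitly) comparing to $M_q^n(k)$ rather than $M_q^n(j)$: it maximizes the exponent $c(i+j-n-c)+j(2n-j)$ over all admissible $(i,j,c)$, showing the maximum is $(\tfrac{3\alpha}{2}-\tfrac{\alpha^2}{4})n^2$ attained at $i=j=k=\alpha n$, $c=k/2$, and then divides by $\log_q M_q^n(k) = (2\alpha-\alpha^2)n^2$ to get $\delta > \tfrac{6-\alpha}{8-4\alpha}$, which is $<1$ precisely because $\alpha<\tfrac{2}{3}$. This comparison to $M_q^n(k)$ is what makes the $\delta$ uniform; the ``$j$'' in the conclusion looks like a typo for ``$k$'', and the weaker $(M_q^n(k))^\delta$ form is all that is used in the proof of Theorem~\ref{thm:matrix_size_gv}, where the lemma is invoked only when $i+j\ge 2\beta n$ with $\beta>\alpha/2$ (so that also $j\ge(2\beta-\alpha)n$). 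One more minor point: the paper's intermediate claim ``$\tfrac{i+j-k}{2} < i+j-n$ by $\alpha<\tfrac{2}{3}$'' has the inequality reversed (since $k<n$ one always has $\tfrac{i+j-k}{2} > \tfrac{i+j-n}{2}$, which is the fact actually needed); this is cosmetic and does not affect the argument. To repair your proof, replace $M_q^n(j)$ by $M_q^n(k)$ in the target and then the single uniform maximization at $i=j=k$, $c=k/2$ suffices --- no case split is needed.
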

\begin{proof}
    Corollary \ref{cor:Q_expression} tells us
    \begin{equation*}
        Q_q^n(i,j,c) = q^{(i-c)(j-c)}\binom{i}{c}_q\binom{n-i}{j-c}_q \prod_{\ell=0}^{j-1}(q^n-q^\ell).
    \end{equation*}
    In particular, we find, when $c>i+j-n$,
    \begin{equation*}
        Q_q^n(i,j,c) = \Theta(q^{(i-c)(j-c)+c(i-c)+(j-c)(n-i-(j-c))+jn}) = \Theta(q^{c(i+j-n-c)+j(2n-j)}).
    \end{equation*}
    Note that for smaller $c$ we have $Q_q^n(i,j,c) = 0$.
    
    We can see that $c(i+j-n-c)$ is negative and decreasing when $c > i+j-n$. We further observe that $\frac{i+j-k}{2} < i+j-n$ by $\alpha<\frac{2}{3}$, and hence $c = \lceil\frac{i+j-k}{2}\rceil$ will be the maximal case. So we will be interested in when $c = \frac{i+j-k}{2}>0$. Finally, we can see $\frac{i+j-k}{2}(i+j-n-\frac{i+j-k}{2})+j(2n-j)$ is maximal when $i=j=k=\alpha n$. After substitution this gives us
    \begin{equation*}
        c(i+j-n-c)+j(2n-j) \leq \left(\frac{3\alpha}{2}-\frac{\alpha^2}{4}\right)n^2.
    \end{equation*}
    On the other hand, $M_q^n(k) = \Theta(q^{(2\alpha-\alpha^2)n^2})$.
    Now by choosing $\delta > \frac{6-\alpha}{8-4\alpha}$ (note $\alpha <\frac{2}{3} \implies \frac{6-\alpha}{8-4\alpha}<1$) we find
    \begin{equation*}
        Q_q^n(i,j,c) < (M_q^n(j))^\delta
    \end{equation*}
    for all $c \in [\lceil\frac{i+j-k}{2}\rceil,j]$, and hence
    \begin{equation*}
        \sum_{c=\lceil\frac{i+j-k}{2}\rceil}^jQ_q^n(i,j,c) \leq (M_q^n(j))^\delta. \qedhere
    \end{equation*}    
\end{proof}

We now have all the preliminary results that we need to prove Theorem \ref{thm:matrix_size_gv}.

\begin{proof}[Proof of Theorem \ref{thm:matrix_size_gv}]
    The main thing we must show to prove Theorem \ref{thm:matrix_size_gv} is that $T \leq D^{2-\varepsilon}$ for some $\varepsilon>0$. We will do this by using the bound from Lemma \ref{lem:TR_Rprime_ineq}, and then bounding each $P_q^n(i,j,k)$ in terms of $M_q^n(k)$.

    Fix a $\beta>0$ such that $\frac{1}{2}\alpha <\beta<\alpha$. We must split into two cases, $i+j \geq 2\beta n$ and $i+j < 2\beta n$:
    \begin{description}
        \item[Case $i+j \geq 2\beta n$.] In this case, $i+j \geq 2\beta n >\alpha n =k$, hence Lemma \ref{lem:Q_eps} tells us 
        \begin{equation*}
            \sum_{c=\lceil\frac{i+j-k}{2}\rceil}^jQ_q^n(i,j,c) \leq (M_q^n(j))^\delta.
        \end{equation*}
        In combination with Corollary \ref{cor:P_upper} we obtain
    \begin{equation}\label{eq:Rprime_R_bound_1}
        P_q^n(i,j,k) \leq M_q^n(j)^{2-\varepsilon_1} \leq M_q^n(k)^{2-\varepsilon_1},
    \end{equation}
    for some $\varepsilon_1>0$

    \item[Case $i+j < 2\beta n$.] We consider the trivial bound $P_q^n(i,j,k) \leq M_q^n(i) \cdot M_q^n(j)$, and find that asymptotically
    \begin{align*}
        M_q^n(i) \cdot M_q^n(j) &= \Theta(q^{2n(i+j)-i^2-j^2}), \\
        M_q^n(k) &= \Theta(q^{2nk-k^2}).
    \end{align*}
    We note that $2n(i+j)-i^2-j^2$ is maximal when $i=j=\beta n$, and hence if we choose
    \begin{equation*}
        \varepsilon_2 < \frac{4(\alpha-\beta)-2(\alpha^2-\beta^2)}{2\alpha-\alpha^2},
    \end{equation*}
    we will have
    \begin{equation}\label{eq:Rprime_R_bound_2}
       P_q^n(i,j,k) \leq M_q^n(i) \cdot M_q^n(j) < M_q^n(k)^{2-\varepsilon_2}
    \end{equation}
    for large enough $n$.
    \end{description}

    Now set $\varepsilon= \min\{\varepsilon_1,\varepsilon_2\}$, then, by Lemma \ref{lem:TR_Rprime_ineq} and the bounds on $P_q^n(i,j,k)$ from equations \eqref{eq:Rprime_R_bound_1} and \eqref{eq:Rprime_R_bound_2}:
    \begin{align*}
         T(\Gamma(\spacc{n_n}{m_n}{q})^k) &\leq 2q^{2\sum_{\ell=2}^t n_\ell m_\ell} \sum_{i=0}^k\sum_{j=k-i}^i P_q^n(i,j,k) \\
         &\leq 2q^{2\sum_{\ell=2}^t n_\ell m_\ell} k^2M_q^n(k)^{2-\varepsilon}\\
         &\leq M_q^n(k)^{2-\varepsilon} \\
         &\leq D(\Gamma(\spacc{n_n}{m_n}{q})^k)^{2-\varepsilon},
    \end{align*}
    for a sufficiently small $\varepsilon>0$ and large enough $n$, where we used that $\sum_{\ell=2}^t n_\ell m_\ell = O(\log_q n)$. Thus, since we just showed that $T \leq D^{2-\varepsilon}$, we can apply Theorem \ref{thm:alpha_logbound} and Proposition \ref{prop:Aq_alpha} to obtain the desired result
    \begin{equation*}
        A_q^{SRK}(\mathbf{n_n},\mathbf{m_n},k+1) \geq \varepsilon\frac{\abs{V}}{20D}\log_2 D.
    \end{equation*}
    The stronger statement about partitioning $\spacc{n_t}{m_t}{q}$ into sum-rank-metric codes of average size larger than the above bound follows directly from Theorem \ref{thm:chi_logbound}, taking $f=D^\varepsilon$.    
\end{proof}

We note that the approach presented in this section can likely be used to extend Theorem \ref{thm:matrix_size_gv} to sum-rank metrics with one $n \times m$ matrix where $n$ is constant or grows slower than $m$. In this case we expect a similar result to Theorem \ref{thm:matrix_size_gv} to hold as well. An extension to sum-rank metrics with two or more $n \times n$ matrices is not as straightforward, and will require adjusting the approach used to bound $T$. 

\section{A connection with set-coloring Ramsey numbers}\label{sec:SRK_Ramsey}

In \cite{CFHMSV22} a connection was established between Hamming-metric codes and set-coloring Ramsey numbers, by showing that the former provides a lower bound for the latter. This was expanded upon in \cite{CFPZ2024} by considering what happens near the zero-rate threshold. The authors of the second paper were able to show that the aforementioned bound is almost tight under this condition. Using the fact that sum-rank-metric codes and Hamming-metric codes can be related to one another as we saw in Section \ref{sec:ham_srk_connection}, we are able to extend the results in \cite{CFPZ2024} for sum-rank-metric codes. These will follow quite directly from Theorem \ref{thm:srk_ham_leq} and Corollary \ref{cor:srk_ham_geq}. In particular, we extend \cite[Theorems 1.2 and 1.3]{CFPZ2024}.

Recall that we denote by $R(k;r,s)$ a \emph{set-coloring Ramsey number}, which is the minimum positive integer $n$ such that in every edge-coloring $\chi:E(K_n) \rightarrow \binom{r}{s}$, where each edge is mapped onto a set of $s$ colors chosen from a palette of $r$ colors, there exists a monochromatic $k$-clique. In this section, we will connect sum-rank metric codes to set-coloring Ramsey numbers using results from \cite{CFHMSV22} and \cite{CFPZ2024}.

Set-coloring Ramsey numbers were first connected to Hamming-metric codes in \cite{CFHMSV22} as follows.
\begin{theorem}\cite[Theorem 2.1]{CFHMSV22}\label{thm:ham_ram}
    Let $a,b,d,N$ be positive integers with $b < a$ and $d < N$ and let $r =Na$ and $s = db$. Then, for $q = R(k;a,b)-1$,
    \begin{equation*}
        A_q(N,d) < R(k;r,s).
    \end{equation*}
\end{theorem}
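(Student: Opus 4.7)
The plan is to show the contrapositive in the following concrete form: from any Hamming code $C \subset \mathbb{F}_q^N$ of size $n$ and minimum distance at least $d$, I will manufacture an edge-coloring of $K_n$ with palette size $r = Na$ and set size $s = db$ that contains no monochromatic $k$-clique. Once such a coloring exists on $K_n$, the definition of $R(k;r,s)$ forces $n < R(k;r,s)$, and taking $n = A_q(N,d)$ delivers the theorem.

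First I would exploit the hypothesis $q = R(k;a,b)-1$: by definition of the set-coloring Ramsey number there is a coloring $\chi_0 : E(K_q) \to \binom{[a]}{b}$ with no monochromatic $k$-clique. I identify $V(K_q)$ with $\mathbb{F}_q$. Next I think of the target palette as the product set $[N] \times [a]$, which has size $r = Na$. Given the code $C$ of size $n$, for every pair of distinct codewords $x,y \in C$ I pick, once and for all, some $d$-subset $J(x,y)$ of the set of positions where $x$ and $y$ differ (which is possible because $d_H(x,y) \geq d$), and I define
\[
\chi(\{x,y\}) = \bigsqcup_{i \in J(x,y)} \{i\} \times \chi_0(\{x_i,y_i\}) \subset [N] \times [a].
\]
Each slice contributes exactly $b$ elements and the slices lie in disjoint "position copies" of $[a]$, so $|\chi(\{x,y\})| = db = s$, giving a valid coloring $\chi : E(K_n) \to \binom{[r]}{s}$.

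The core step is to assume a monochromatic $k$-clique $S \subset C$ in $(K_n,\chi)$ with common color set $T \subset [N] \times [a]$, and then extract a monochromatic $k$-clique in $(K_q,\chi_0)$, contradicting the choice of $\chi_0$. The key observation is that $T$ itself has a canonical decomposition $T = \bigsqcup_{i \in J} \{i\} \times T_i$, where $J$ is the set of first coordinates appearing in $T$ and $T_i$ is the corresponding fiber; since every edge of $S$ produces $T$ from $d$ disjoint $b$-sets, it follows that $|J| = d$ and $|T_i| = b$ for each $i \in J$, and moreover $J(x,y) = J$ and $\chi_0(\{x_i,y_i\}) = T_i$ for every edge $\{x,y\} \in S$ and every $i \in J$. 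Fixing any one $i \in J$ (which exists because $d \geq 1$), the coordinate map $x \mapsto x_i$ is then injective on $S$ (its range is contained in the positions where distinct codewords of $S$ disagree) and sends $S$ to a $k$-subset of $\mathbb{F}_q$ on which $\chi_0$ is identically $T_i$, i.e.\ a monochromatic $k$-clique in $(K_q,\chi_0)$. This contradicts the defining property of $\chi_0$.

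The main obstacle, and the only delicate point, is making sure the color set $\chi(\{x,y\})$ is constructed so that its two pieces of information — which positions contribute, and which inner $b$-set appears at each — can both be read off uniquely from $T$ alone; that is what forces $J(x,y)$ and each $\chi_0(\{x_i,y_i\})$ to agree across all edges of a monochromatic clique. The product structure $[r] = [N] \times [a]$ together with the choice $s = db$ is precisely what guarantees this rigidity, and is the reason the parameters $r = Na$ and $s = db$ appear. Everything else is bookkeeping: conclude $A_q(N,d) < R(k;r,s)$ because the coloring $\chi$ on $K_{A_q(N,d)}$ avoids monochromatic $k$-cliques.
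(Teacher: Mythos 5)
Your proof is correct and is essentially the argument from the cited source \cite{CFHMSV22}: encode each pair of codewords by a product color $(\text{position}, \text{inner color})$ drawn from a $k$-clique-free base coloring of $K_q$, observe that a monochromatic clique forces a common differing coordinate, and project to that coordinate to obtain a monochromatic clique in $K_q$. The paper itself only cites this theorem without proof, and your argument fills in exactly the standard construction, with the one small caveat that $q=R(k;a,b)-1$ need not be a prime power, so $\mathbb{F}_q^N$ should be read as an arbitrary $q$-ary alphabet (which is what you in fact use by identifying $V(K_q)$ with the alphabet rather than with a field).
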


The tightness of this bound was further investigated in \cite{CFPZ2024}. Of particular interest to them, was the case when $d$ is close to the \emph{zero-rate threshold}. That is, when $d$ is close to $(1-\frac{1}{q})N$.

\begin{theorem}\cite[Theorem 1.2]{CFPZ2024}
    For any positive integer $q$, and $\varepsilon > 0$, there is a $c>0$ such that if $N,d$ are positive integers with $d \leq (1-\frac{1}{q})t$ and $j = (1-\frac{1}{q})t - d + 1$, then
    \begin{equation*}
        R(q+1;t,d) \leq \max((1+\varepsilon)A_{q}(N,d-cj),\varepsilon d).
    \end{equation*}
\end{theorem}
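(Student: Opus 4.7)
The plan is to reduce the Ramsey problem to a Hamming-code extremal question via a probabilistic encoding of vertices, then extract a monochromatic clique from the resulting structure. Let $\chi\colon E(K_n)\to\binom{[t]}{d}$ be any coloring with $n\geq\max\{(1+\varepsilon)A_q(N,d-cj),\varepsilon d\}$; the aim is to produce a monochromatic $(q+1)$-clique, i.e., $q+1$ vertices and a color $c^{*}\in[t]$ such that $c^{*}\in\chi(e)$ for every edge $e$ of the clique.

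The first step is to encode vertices as $q$-ary codewords. I would associate to each vertex $v$ a codeword $\phi(v)\in[q]^{N}$ derived from how the colors incident to $v$ are distributed across a suitably chosen (random) partition of the palette $[t]$ into $q$ classes, possibly combined with an auxiliary coloring obtained from the $R(q+1;a,b)$-type construction in Theorem \ref{thm:ham_ram}. The target is to arrange, by a Chernoff-type concentration bound on the random partition, that the Hamming distance $d_{H}(\phi(v),\phi(w))$ is tightly controlled by $d-|\chi(vw)|$ up to an additive slack of order $j$. With the constant $c$ chosen so that this slack is at most $cj$, the key implication becomes: if $d_{H}(\phi(v),\phi(w))<d-cj$, then $\chi(vw)$ contains a prescribed color.

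The second step is to apply the code bound. Since $n>(1+\varepsilon)A_{q}(N,d-cj)$, the codewords $\{\phi(v):v\in V(K_n)\}$ cannot all be pairwise at Hamming distance at least $d-cj$; by averaging over small Hamming balls one extracts a large subset $S\subseteq V(K_n)$ whose codewords lie in a single ball, and by the implication above every pair inside $S$ shares a fixed color. Iterating this extraction $q$ times, while the remaining pool of vertices stays above the code-theoretic bound, yields $q+1$ vertices whose pairwise edges all contain a common color. The second term $\varepsilon d$ in the maximum absorbs the base case of the iteration, when the pool shrinks below the threshold where the Hamming code bound ceases to be binding.

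The main obstacle lies in Step 1: the random partition of $[t]$ has to be good simultaneously for all $\binom{n}{2}$ edges, which forces a union bound against Chernoff deviations. Near the zero-rate threshold $d=(1-1/q)t$, the parameter $j=(1-1/q)t-d+1$ is small, so the concentration window is narrow and the constant $c$ has to be chosen delicately: large enough that the shift $cj$ dominates the probabilistic deviation, yet small enough that $A_{q}(N,d-cj)$ remains close to $A_{q}(N,d)$ so the bound is informative. Making these two requirements compatible for every $\varepsilon>0$ is the technical core of the argument, and the eventual value of $c$ will depend only on $q$ and $\varepsilon$.
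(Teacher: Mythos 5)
This statement is cited from \cite{CFPZ2024} as Theorem 1.2 and is not proved in the present paper; it is invoked as a black box, so there is no in-paper proof to compare your attempt against. Evaluating your sketch on its own terms, however, two genuine gaps appear.

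First, your Step 2 asserts that once codewords cluster in a small Hamming ball, ``every pair inside $S$ shares a fixed color.'' Your implication only gives, for each close pair $v,w$, that $\chi(vw)$ contains \emph{some} prescribed color depending on where $\phi(v)$ and $\phi(w)$ agree --- but these agreement patterns can differ across pairs, so the shared color need not be the \emph{same} color throughout $S$. Pairwise color-sharing is much weaker than monochromaticity of the clique; bridging that gap is exactly the hard part of the theorem and is not addressed by a single ball-averaging step. Second, the iteration in Step 2 is underspecified and somewhat circular: if one really had a subset all of whose pairs share a single color, one would already be done and no further iterations would be needed, whereas if one only has pairwise sharing of possibly distinct colors, it is unclear what invariant the $q$-step iteration maintains or why it terminates with a $(q+1)$-clique rather than a rainbow-like configuration. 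Finally, the statement as printed in the paper also contains a notational slip (it mixes $N$ and $t$ as the code length / palette size); the actual CFPZ2024 statement uses a single parameter for both, and resolving this ambiguity is a prerequisite to making any encoding $\phi\colon V\to[q]^N$ coherent.
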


Using the connection to the size of the largest Hamming-metric code from Theorem \ref{thm:srk_ham_leq}, the Ramsey-type bound from Theorem \ref{thm:ham_ram} was extended to the sum-rank-metric case in \cite{AAA2024}. 

\begin{theorem} \cite{AAA2024}\label{thm:SRK_Ram_LB}
Let $m=\max\{m_1,\dots,m_t\}$. Let $a,b,d,N$ be positive integers with $b < a$ and $d < N$ and let $r =Na$ and $s = db$. If $q^m=R(k;a,b)-1$
then there exist constants $c,c'>0$ such that, for any integers $k,r,s$ with $k\geq3$ and $r>s\geq1$ we have 
\begin{equation}\label{eq:srk_ramsey}
    A_q^{SRK}(\mathbf{n},\mathbf{m},d) < R(k;r,s)\leq 2^{c'k(r-s)^2r^{-1}\log\left(\frac{r}{\min(s,r-s)}\right)}.
\end{equation}
\end{theorem}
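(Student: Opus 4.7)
The plan is to combine the three ingredients already assembled in the excerpt: Theorem \ref{thm:srk_ham_leq}, which reduces the sum-rank case to the Hamming case over an extension field; Theorem \ref{thm:ham_ram}, which turns a Hamming-metric bound into a set-coloring Ramsey bound; and a known upper bound on set-coloring Ramsey numbers from the literature on Ramsey theory. Since every link in the chain is already available, the proof is essentially a matter of verifying that the alphabet sizes and parameters match up correctly.

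First I would apply Theorem \ref{thm:srk_ham_leq} with $m=\max\{m_1,\dots,m_t\}$ and $N=n_1+\dots+n_t$ to obtain
\[
A_q^{SRK}(\mathbf{n},\mathbf{m},d)\;\leq\; A_{q^m}(N,d).
\]
The hypothesis of the theorem fixes $q^m = R(k;a,b)-1$, so the alphabet size $q^m$ is precisely what Theorem \ref{thm:ham_ram} requires. Invoking Theorem \ref{thm:ham_ram} with this choice, together with the given $a,b,d,N$ and $r=Na$, $s=db$, gives the strict inequality
\[
A_{q^m}(N,d)\;<\;R(k;r,s).
\]
Chaining these two steps yields the first inequality of \eqref{eq:srk_ramsey}.

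For the second inequality, I would simply cite the known upper bound on set-coloring Ramsey numbers, which holds for all integers $k\geq 3$ and $r>s\geq 1$ with an absolute constant $c'>0$; this is exactly the form proved in the Ramsey-theoretic literature (see \cite{CFPZ2024}), and no sum-rank-metric input is needed here. The bound transfers verbatim to our setting once the first inequality has been established.

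There is no real obstacle in this argument beyond bookkeeping: the only point that needs care is checking that the hypothesis $q^m=R(k;a,b)-1$ aligns the alphabet of Theorem \ref{thm:ham_ram} with the extension field used in Theorem \ref{thm:srk_ham_leq}, and that the parameters $r=Na$, $s=db$ feeding into the Ramsey bound are the same as in the resulting upper estimate. Both checks are immediate from the way the statement is set up, so the proof reduces to the two-line chain above followed by a citation.
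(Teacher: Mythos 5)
Your proposal is correct and follows exactly the route the paper indicates: the paper cites this result from \cite{AAA2024} and explains it as a direct chaining of Theorem~\ref{thm:srk_ham_leq} (sum-rank to Hamming over $\mathbb{F}_{q^m}$) with Theorem~\ref{thm:ham_ram} (Hamming to set-coloring Ramsey), followed by the known upper bound on $R(k;r,s)$. One minor citation nit: the quantitative bound $R(k;r,s)\leq 2^{c'k(r-s)^2r^{-1}\log\left(\frac{r}{\min(s,r-s)}\right)}$ originates in \cite{CFHMSV22} rather than \cite{CFPZ2024}, but this does not affect the argument.
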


 We will now be extending several results relating Hamming-metric codes to set-coloring Ramsey numbers, in particular those in \cite{CFPZ2024}, to sum-rank-metric codes.

\begin{corollary}
    Let $k$ be a positive integer, and let $m=\max\{m_1,\dots,m_t\}$. Let $j \leq \sqrt{N\frac{k-1}{q^m-1}}$ and let $\mathcal{C}\subseteq\spac$ be a sum-rank-metric code with $|\mathcal{C}|\geq 2$ and $(1-1/q^m)(N-j)<N$. Then,
    \begin{equation*}
        A_q^{SRK}(\mathbf{n},\mathbf{m},(1-\frac{1}{q^m})(N-j)) = O_{q^m,k}(N^k).
    \end{equation*}
\end{corollary}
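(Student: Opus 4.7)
The plan is to reduce the sum-rank statement directly to the analogous Hamming-metric statement that the authors attribute to~\cite{CFPZ2024}, exploiting the one-sided inequality in Theorem~\ref{thm:srk_ham_leq}. Concretely, I would first apply Theorem~\ref{thm:srk_ham_leq} to get
\begin{equation*}
    A_q^{SRK}\!\left(\mathbf{n},\mathbf{m},\left(1-\tfrac{1}{q^m}\right)(N-j)\right) \;\leq\; A_{q^m}\!\left(N,\left(1-\tfrac{1}{q^m}\right)(N-j)\right),
\end{equation*}
where $m=\max\{m_1,\dots,m_t\}$. This reduces the corollary to a purely Hamming-metric statement about codes over $\mathbb{F}_{q^m}$ with distance sitting just below the Plotkin/zero-rate threshold $(1-1/q^m)N$.

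Next, I would invoke the $q$-ary Hamming analogue from~\cite{CFPZ2024}, which is precisely the type of bound the excerpt is building toward: when $d=(1-1/Q)(N-j)$ with $j \leq \sqrt{N(k-1)/(Q-1)}$, one has $A_Q(N,d)=O_{Q,k}(N^k)$. Applying this with $Q=q^m$ and with the specific offset $j\leq\sqrt{N(k-1)/(q^m-1)}$ yields
\begin{equation*}
    A_{q^m}\!\left(N,\left(1-\tfrac{1}{q^m}\right)(N-j)\right) \;=\; O_{q^m,k}(N^k),
\end{equation*}
and combining with the first step concludes the proof.

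Thus the argument is a two-line chain: the sum-rank-to-Hamming comparison from Section~\ref{sec:ham_srk_connection} followed by the polynomial upper bound near the zero-rate threshold. The only substantive work here is matching hypotheses. The quantitative hypothesis $j\leq \sqrt{N(k-1)/(q^m-1)}$ is designed precisely for the Hamming input with alphabet size $q^m$, and the side-condition $(1-1/q^m)(N-j)<N$ just certifies that the minimum distance we feed into the Hamming bound is strictly less than the block length, so that the bound is non-vacuous. No calculation beyond checking these matches is required.

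The main (and really only) obstacle is making sure that the direction of Theorem~\ref{thm:srk_ham_leq} goes the right way: we need an upper bound on $A_q^{SRK}$, and the inequality $A_q^{SRK}(\mathbf{n},\mathbf{m},d)\leq A_{q^m}(N,d)$ gives exactly that. The reverse inequality from Corollary~\ref{cor:srk_ham_geq} would have been useless here. Once that orientation is settled, no additional parameter optimization is needed, since the exponent $k$ and the implicit constants are inherited unchanged from the Hamming-metric result.
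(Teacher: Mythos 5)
Your proposal matches the paper's proof exactly: the paper's own argument is the one-line chain of applying Theorem~\ref{thm:srk_ham_leq} to reduce to the Hamming case over $\mathbb{F}_{q^m}$, then invoking \cite[Theorem 1.3]{CFPZ2024}. Your attention to the direction of the inequality and to matching the offset parameter $j$ is precisely the (minimal) bookkeeping the paper relies on.
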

\begin{proof}
    Apply Theorem \ref{thm:srk_ham_leq} followed by \cite[Theorem 1.3]{CFPZ2024}.
\end{proof}

In view of Corollary \ref{cor:srk_ham_geq}, we can also make use of the upper bounding results.
\begin{corollary}\label{cor:SRK_Ram_UB}
    Let $\mathbf{n} = (n_1,\hdots,n_t)$ and $\mathbf{m} = (m_1,\hdots,m_t)$ with $m' = \min\{m_i\}$. For any positive integer $q$, and $\varepsilon > 0$, there is a $c>0$ such that if $d$ is a positive integer with $d \leq (1-\frac{1}{q^{m'}})t$ and $j = (1-\frac{1}{q^{m'}})t - d + 1$, then
    \begin{equation*}
        R(q^{m'}+1;t,d) \leq \max((1+\varepsilon)A_{q}^{SRK}(\mathbf{n},\mathbf{m},d-cj),\varepsilon d).
    \end{equation*}
\end{corollary}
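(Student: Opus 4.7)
The plan is to reduce the statement directly to \cite[Theorem 1.2]{CFPZ2024} via Corollary \ref{cor:srk_ham_geq}, and there is essentially no obstacle beyond bookkeeping.

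First, I would apply \cite[Theorem 1.2]{CFPZ2024} with the alphabet size $q^{m'}$ in place of $q$ and with the code length set equal to $t$. For every $\varepsilon>0$ this yields a constant $c>0$ such that whenever $d \leq (1-1/q^{m'})t$ and $j = (1-1/q^{m'})t - d + 1$ we have
\begin{equation*}
R(q^{m'}+1;t,d) \;\leq\; \max\bigl((1+\varepsilon)\,A_{q^{m'}}(t,d-cj),\,\varepsilon d\bigr).
\end{equation*}

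Next, I would invoke Corollary \ref{cor:srk_ham_geq} to pass from Hamming codes over $\mathbb{F}_{q^{m'}}$ of length $t$ to sum-rank-metric codes in $\mathbb{F}_q^{\mathbf{n}\times\mathbf{m}}$. That corollary supplies the comparison $A_{q^{m'}}(t,d') \leq A_q^{SRK}(\mathbf{n},\mathbf{m},d')$ for any admissible distance $d'$; applying it with $d' = d-cj$ and substituting into the previous bound gives
\begin{equation*}
R(q^{m'}+1;t,d) \;\leq\; \max\bigl((1+\varepsilon)\,A_q^{SRK}(\mathbf{n},\mathbf{m},d-cj),\,\varepsilon d\bigr),
\end{equation*}
which is exactly the claimed inequality.

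There is no genuine obstacle in this argument: the hypotheses align perfectly after the substitution $q \mapsto q^{m'}$, and the zero-rate threshold $(1-1/q^{m'})t$ together with the definition of $j$ are inherited verbatim from the Hamming setting. The only point worth noting is that the constant $c$ produced by \cite[Theorem 1.2]{CFPZ2024} depends on $q^{m'}$ and $\varepsilon$ alone; in particular it is independent of the tuples $\mathbf{n}$ and $\mathbf{m}$, so the same $c$ works uniformly, as required.
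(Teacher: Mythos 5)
Your proof is correct and matches the paper's argument: the paper's proof is the one-liner ``Apply Corollary \ref{cor:srk_ham_geq} followed by \cite[Theorem 1.2]{CFPZ2024},'' and you have simply unpacked exactly those two steps in the appropriate order.
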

\begin{proof}
    Apply Corollary \ref{cor:srk_ham_geq} followed by \cite[Theorem 1.2]{CFPZ2024}.
\end{proof}

Unlike for the Hamming case, the set-coloring Ramsey number considered in Theorem \ref{thm:SRK_Ram_LB} and Corollary \ref{cor:SRK_Ram_UB} are considerably different for most $\mathbf{n}$, $\mathbf{m}$. So we cannot say anything about near-tightness as it was done for the Hamming case in \cite{CFPZ2024} (since our connection between the sum-rank metric and the set-coloring Ramsey number goes through the Hamming metric). Because the sum-rank metric and Hamming metric are in general not close to being equivalent (in particular, while $f$ as seen in \eqref{eq:srk_to_Hamming} is injective, it is not bijective in general), most results from \cite{CFPZ2024} do not extend to the sum-rank metric.

\section{Concluding remarks}\label{sec:concludingremarks}

In this paper we proved an asymptotic improvement for the GV bound for some cases of the sum-rank metric, and even showed a stronger result that states that we can partition the sum-rank-metric ambient space into codes whose average size matches these bounds improvements, see Theorem \ref{thm:srk_asymp_gv} and Theorem \ref{thm:matrix_size_gv}. Moreover, we extended several results from \cite{CFPZ2024} on the connection between Hamming-metric codes and set-coloring Ramsey numbers to the sum-rank metric.

A future line of research may be that of extending Theorems \ref{thm:srk_asymp_gv} and \ref{thm:matrix_size_gv} to more types of sum-rank metrics. The approach presented in Theorem \ref{thm:srk_asymp_gv} allows us to improve the GV bound asymptotically for a large number of instances of the sum-rank metric, with the only requirement being that $\mathbf{n}$ ends in a large enough amount of $1$'s and $\mathbf{m}$ ends in a large amount of a single constant. This condition cannot be dropped with our approach, as we do not in general have $D \sim D_H$ and $T \sim T_H$. In particular, to extend our result from Theorem \ref{thm:srk_asymp_gv} one would need to use new estimations on $D$ and $T$. A different approach that could possibly be used to extend Theorem \ref{thm:srk_asymp_gv} is the framework provided in \cite{kim_exponential_2022}. In this paper, the authors made use of a probabilistic point of view to present a more general approach to proving asymptotic improvements for the GV bound, which they then applied to several coding metrics. However, such conditions are quite hard to work with for the sum-rank case, in part because we still need to work with $D$, the degree of a vertex in the power of the sum-rank-metric graph $\Gamma(\spac)^k$. Moreover, this $D$ becomes quite unruly in the general case, as can be seen in the expression derived in \cite[Lemma III.5]{byrne_fundamental_2021}. Similarly, the result from Theorem \ref{thm:matrix_size_gv} could also be extended to to sum-rank metrics with non-square matrix growing at rate $n$, or sum-rank metrics with more than one component growing at rate $n$,  as mentioned at the end of Section \ref{sec:asymptoticsizematrix}. However, the latter becomes increasingly complex if one uses an approach similar to ours.

\subsection*{Acknowledgments}

Aida Abiad is supported by NWO (Dutch Research Council) through the grants \linebreak VI.Vidi.213.085 and OCENW.KLEIN.475. Harper Reijnders is supported through the grant VI.Vidi.213.085. Michael Tait is partially supported by the US National Science Foundation via the grant DMS-2245556.

\bibliographystyle{abbrv}

\end{document}